\theoremstyle{plain}
\newtheorem{theorem}{Theorem}
\newtheorem{proposition}{Proposition}
\newtheorem*{Main Lemma}{Main Lemma}
\theoremstyle{definition}
\newtheorem{rem}{Remark}
\DeclareMathOperator*{\hd}{\mathrm{dim}_\mathrm{H}}
\title[On the uniqueness of an ergodic measure of full dimension]{On the uniqueness of an ergodic measure of full dimension for non-conformal repellers}
\author{Nuno Luzia}
\begin{document}
\maketitle 
\begin{abstract}
We give a subclass $\mathcal{L}$ of \emph{Non-linear Lalley-Gatzouras carpets} and an open set $\mathcal{U}$ in $\mathcal{L}$ such that any carpet in $\mathcal{U}$  has a unique ergodic measure of full dimension. In particular, any Lalley-Gatzouras carpet which is close to a \emph{non-trivial} general Sierpinski carpet has a unique ergodic measure of full dimension.
\end{abstract}
\tableofcontents
\section{Introduction}

It is well known that a $\mathrm{C}^{1+\alpha}$ conformal repeller has a unique ergodic measure of full dimension. This is a consequence of Bowen's equation together with the classical thermodynamic formalism developed by Sinai-Ruelle-Bowen, see \cite{14}, \cite{12}, \cite{3} and \cite{13}. Moreover, this measure is a Gibbs state relative to some H\"older-continuous potential. Is this true for non-conformal repellers?

The simplest examples of non-conformal repellers are the \emph{general Sierpinski carpets}, whose Hausdorff dimension was studied by Bedford \cite{2} and McMullen \cite {10}. They computed the Hausdorff dimension of these sets by establishing the variational principle for the dimension. As a consequence, these repellers have an ergodic measure of full dimension (in fact Bernoulli) and, by \cite{11}, this measure is \emph{unique}.

In \cite{6} Lalley and Gatzouras introduced a larger class of non-conformal repellers and computed their Hausdorff dimension also by establishing the variational principle for the dimension, and so these repellers  have a Bernoulli measure of full dimension (see also \cite{9} for a \emph{random} version of this result). In \cite{1} the authors give an example of a \emph{Lalley-Gatzouras carpet} which has two Bernoulli measures of full dimension. So the answer to the question formulated above is negative.
  
In this paper, we study this problem -- \emph{existence} and \emph{uniqueness} of an ergodic measure of full dimension -- for a larger class of non-conformal repellers which we shall call \emph{Non-linear Lalley-Gatzouras carpets}. As the name suggests, these repellers are the $\mathrm{C}^{1+\alpha}$ non-linear versions of the Lalley-Gatzouras carpets. They are defined by an Iterated Function System $\{f_{ij}\}$ where $f_{ij}\colon[0,1]^2\to[0,1]^2$, $i=1,...,m,\,j=1,...,m_i$ have the \emph{skew-product} form $f_{ij}(x,y)=(a_{ij}(x,y),b_i(y))$, with the domination condition $0<|\partial_x a_{ij}(x,y)|<|b_i'(y)|<1$, and the corresponding attractor $\Lambda$ (see Section \ref{s2} for precise definitions). The Hausdorff dimension of these repellers was, essentially, computed in \cite{7} by establishing the variational principle for the dimension. Because of the non-linearity of the transformations $f_{ij}$, the \emph{existence} of an ergodic measure of full dimension turns out to be a non-trivial problem. This was proved to be true in \cite{8} (in a more general context). Then we have the following.

\begin{theorem}\label{t1}
A Non-linear Lalley-Gatzouras carpet has an ergodic measure of full dimension.
Moreover, this measure is a Gibbs state for a relativized variational principle. 
\end{theorem}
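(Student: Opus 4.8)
The plan is to combine the variational principle for the Hausdorff dimension established, in this non-linear setting, in \cite{7} with a two-layer thermodynamic formalism: the classical (conformal) formalism on the one-dimensional base, together with Ledrappier--Walters' \emph{relative} variational principle along the fibres. Let $\sigma\colon\Sigma\to\Sigma$ be the one-sided subshift coding $\Lambda$, let $\pi\colon\Sigma\to\bar\Sigma$ be the factor corresponding to projection onto the $y$-coordinate, and let $\bar f\colon\bar\Lambda\to\bar\Lambda$ be the induced $C^{1+\alpha}$ (hence conformal) expanding map on the base repeller $\bar\Lambda\subset[0,1]$. For an invariant measure $\mu$ write $\bar\mu=\pi_*\mu$, $\lambda^y(\mu)=\int\log|b_i'|^{-1}\,d\mu$ and $\lambda^x(\mu)=\int\log|\partial_xa_{ij}|^{-1}\,d\mu$; the domination condition gives $\lambda^x(\mu)>\lambda^y(\mu)>0$. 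By \cite{7},
$$\hd\Lambda=\sup_\mu D(\mu),\qquad D(\mu)=\frac{h_{\bar\mu}(\bar f)}{\lambda^y(\mu)}+\frac{h_\mu(\sigma)-h_{\bar\mu}(\bar f)}{\lambda^x(\mu)},$$
so it is enough to exhibit an ergodic maximiser of $D$ with the asserted Gibbs structure.

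First I would integrate out the fibre degrees of freedom. For $t\ge 0$ set $\varphi_t=t\log|\partial_xa_{ij}|$ on $\Sigma$ (a non-positive, Hölder potential) and let $\bar P_t\colon\bar\Lambda\to\mathbb R$ be the relative pressure function defined by the Ledrappier--Walters relative variational principle,
$$\sup_{\pi_*\mu=\nu}\Big(h_\mu(\sigma\,|\,\bar f)+\textstyle\int\varphi_t\,d\mu\Big)=\int_{\bar\Lambda}\bar P_t\,d\nu\qquad\text{for every }\bar f\text{-invariant }\nu.$$
Because the fibre maps are uniformly expanding and $C^{1+\alpha}$ and depend Hölder-continuously on the base point, a fibrewise Ruelle--Perron--Frobenius argument shows that $\bar P_t$ is Hölder continuous on $\bar\Lambda$, that $t\mapsto\bar P_t$ is continuous (convex and non-increasing), and that for each $\nu$ the relative supremum is attained at a unique measure $\mu_{\nu,t}$ — the \emph{relative equilibrium state} of $\varphi_t$ over $\nu$ — which enjoys the relative Gibbs property.

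Next I would reduce the whole problem to an optimisation on the conformal base. For a fixed base measure $\nu$, maximising the fibre part of $D$ is a relative Bowen equation: the maximum of $h_\mu(\sigma\,|\,\bar f)/\lambda^x(\mu)$ over $\{\mu:\pi_*\mu=\nu\}$ equals the unique $s_2=s_2(\nu)$ with $\int_{\bar\Lambda}\bar P_{s_2}\,d\nu=0$, and is attained at $\mu_{\nu,s_2}$. Thus $D(\mu_{\nu,s_2})=h_{\bar\mu}(\bar f)/\lambda^y(\nu)+s_2(\nu)$, and one is left to maximise this quantity over base measures $\nu$. A Lagrange-multiplier computation shows that the optimal $\nu$ must be an equilibrium state, for $\bar f$, of an explicit Hölder potential assembled from $\log|b_i'|^{-1}$ and $\bar P_{s_2}$, with coefficients fixed by $s_2$ and by the Lyapunov exponents of the eventual maximiser; classical thermodynamic formalism provides this equilibrium state, uniquely, as a Gibbs state. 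The remaining finitely many parameters (the dimension $s=\hd\Lambda$, the fibre exponent $s_2$, and $\lambda^x$, $\lambda^y$ of the maximiser) are pinned down by a closed system of Bowen-type equations, whose solvability is exactly the existence statement imported from \cite{8}. The resulting measure $\mu:=\mu_{\nu,s_2}$ then realises $\hd\Lambda$; it is, by construction, a relative Gibbs state over an equilibrium state of the base — i.e.\ a Gibbs state for the relativized variational principle — and it is ergodic, since $\nu$ is ergodic (being an equilibrium state of a Hölder potential on the mixing conformal base) and the relative equilibrium state over an ergodic base measure is unique and hence ergodic.

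The main obstacle — and the reason existence is genuinely non-trivial here, unlike in the conformal case — is that $D$ is \emph{not} upper semicontinuous: it is a difference of entropies divided by Lyapunov exponents, so a weak-$\ast$ accumulation point of near-maximisers need not maximise $D$. The strategy above avoids taking any such limit: the fibre variables are removed \emph{exactly}, via the relative variational principle, leaving a well-posed optimisation over a conformal system that already possesses a complete thermodynamic formalism. Consequently the technical crux is the regularity of the relative pressure function — that $\bar P_t$ is Hölder in the base point, jointly continuous in $t$, and that the corresponding relative equilibrium states vary continuously in $(\nu,t)$ — because this is precisely what licenses using $\bar P_t$ as a base potential in the classical formalism and what makes the fixed-point argument for the closing parameters go through. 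Establishing this regularity uniformly over the fibre dynamics is where the real work lies; the rest is bookkeeping with the formula of \cite{7} and the uniqueness statements of conformal thermodynamic formalism.
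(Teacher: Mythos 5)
Your proposal reconstructs, in more detail, exactly the route the paper takes: the paper dispatches Theorem~\ref{t1} by citing \cite{7} for the variational principle for dimension and \cite{8} for the existence of an ergodic maximiser that is a relative Gibbs state, and your sketch is essentially an expansion of what \cite{8} does --- reduce via the relativized variational principle (Denker--Gordin--Heinemann \cite{4,5} in the paper; you name Ledrappier--Walters, but the role is the same) to an optimisation over base measures, with the fibre part encoded by the auxiliary pressure function $\log A_{-t\varphi}$ and the Bowen-type root $t(\nu)$, and then identify the optimal base measure as a classical Gibbs state for a potential built from $\psi$ and $\log A_{-t\varphi}$. Since your ``solvability of the closed system of Bowen-type equations'' step is explicitly imported from \cite{8}, your argument and the paper's are the same modulo the amount of scaffolding displayed; no gap.
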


As we know now (by \cite{1}), such a measure is, in general, not unique. The main purpose of this paper is to give sufficient conditions for having a \emph{unique} ergodic measure of full dimension, based on an idea introduced in Remark 2 of \cite{8}. 

We can introduce a natural topology on the class of Non-linear Lalley-Gatzouras carpets by saying that two of these carpets are close if the corresponding functions of the Iterated Function System are $\mathrm{C}^{1+\alpha}$ close
(with alphabet $(i,j)$ fixed). We denote by $\mathcal{L}$ the subclass of Non-linear Lalley-Gatzouras carpets for which $\partial_{xx} a_{ij}=0$, i.e. $a_{ij}(x,y)=\tilde{a}_{ij}(y) x+u_{ij}(y)$. Of course, $\mathcal{L}$ contains the Lalley-Gatzouras carpets. In this paper, a \emph{general Sierpinski carpet} is a Lalley-Gatzouras carpet for which $\partial_x a_{ij}=a$ and $b_i'=b$ for some positive constants $a$ and $b$ and every $(i,j)$ (this is a more general definition than usual). We say that such a carpet is \emph{non-trivial} if $a<b$ and the natural numbers $m_i\ge 2$, $i=1,...,m$ are not all equal to each other.

\begin{theorem}\label{t2}
There is an open set $\mathcal{U}$ in $\mathcal{L}$ such that: 
\begin{enumerate}[(i)]
\item 
 $\mathcal{U}$ contains all non-trivial general Sierpinski carpets; 
\item 
 every reppeller $K$ in $\mathcal{U}$ has a unique ergodic measure of full dimension $\mu_K$; 
\item
the map  $\mathcal{U}\ni K \mapsto \mu_K$ is continuous.
\end{enumerate}
\end{theorem}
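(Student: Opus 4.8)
The strategy is to reduce the uniqueness question to a convexity/strict-concavity property of a suitable pressure-type functional, following the idea in Remark 2 of \cite{8}. By Theorem \ref{t1} an ergodic measure of full dimension always exists, and for carpets in $\mathcal{L}$ the $x$-dynamics is fiberwise affine, so the Lyapunov exponents and the pointwise dimension of a Bernoulli (or more generally ergodic) measure $\mu$ with marginal $\nu$ on the $y$-base can be written explicitly: the dimension is $h_\nu/\chi_y + (h_\mu - h_\nu)/\chi_x(\mu)$, where $\chi_y,\chi_x$ are the two Lyapunov exponents and the horizontal exponent $\chi_x(\mu)=-\int \log|\tilde a_{ij}(y)|\,d\mu$ depends only on the conditional measures along fibers through the functions $\tilde a_{ij}$. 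The first step is to set up this variational formula precisely, citing \cite{7} and \cite{8}, and to identify the measures of full dimension as the maximizers of a concave functional $\mu\mapsto D(\mu)$ on the (weak-$*$ compact, convex) simplex of invariant measures projecting nicely onto the coding space.

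The second step is the heart of the matter: showing that on an open set $\mathcal{U}\subset\mathcal{L}$ this functional is \emph{strictly} concave along the relevant directions, so that the maximizer is unique. The natural candidate for $\mathcal{U}$ is a $\mathrm{C}^{1+\alpha}$-neighborhood of the non-trivial general Sierpinski carpets. For an actual general Sierpinski carpet the optimization decouples: by Bedford--McMullen the unique maximizer is an explicit Bernoulli measure, and non-triviality ($a<b$ and the $m_i$ not all equal) is exactly what forces the relevant Hessian of the entropy-vs-exponent trade-off to be negative definite rather than merely semidefinite — this is where the hypothesis is used. I would make this quantitative: compute the second variation of $D$ at the Bedford--McMullen measure, show it is bounded away from $0$ using $a<b$ and the spread of the $m_i$, and then argue by a perturbation/continuity estimate on the $\mathrm{C}^{1+\alpha}$ data that strict concavity with a uniform gap persists on a whole neighborhood $\mathcal{U}$. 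This gives (i) and (ii) simultaneously.

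For part (iii), continuity of $K\mapsto\mu_K$, the plan is a standard upper-semicontinuity-plus-uniqueness argument. One shows that the map $(K,\mu)\mapsto D_K(\mu)$ is upper semicontinuous (entropy is u.s.c. because the coding is finite-to-one and the IFS has bounded distortion; the exponents vary continuously in $\mathrm{C}^1$ data), that the maximal value $\dim_{\mathrm H} K$ varies continuously, and then: if $K_n\to K$ in $\mathcal{U}$ and $\mu_{K_n}\to\mu$ along a subsequence, u.s.c. forces $\mu$ to be a measure of full dimension for $K$, hence $\mu=\mu_K$ by uniqueness; since this holds for every subsequential limit, $\mu_{K_n}\to\mu_K$. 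The uniform concavity gap from step two also gives, if desired, a modulus of continuity.

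The main obstacle is the second step — establishing strict concavity with a \emph{uniform} (in $K$) gap. The difficulty is twofold: first, the functional $D$ is concave but not automatically strictly so (the linear term $h_\nu/\chi_y$ is affine in a sense, and $\chi_x(\mu)$ appears in a denominator), so one must isolate the genuinely strictly-concave piece, which is precisely the "variational principle for $h_\mu - h_\nu$ against $\chi_x$" that produces the horizontal contribution; second, one must control how the non-linearity (the dependence of $\tilde a_{ij}$ and $b_i$ on $y$) perturbs this Hessian, which requires the $\mathrm{C}^{1+\alpha}$ closeness and bounded-distortion estimates to be turned into an honest continuity statement for the second variation. Handling the relativized variational principle and its Gibbs states (from Theorem \ref{t1}) carefully — so that "maximizer" really does mean the Gibbs state and the second-variation computation is legitimate — is the technical crux.
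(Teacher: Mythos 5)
Your overall game plan—set up the dimension variational principle, perturb from the non-trivial general Sierpinski carpet, use non-triviality as the source of strictness, and get continuity of $K\mapsto\mu_K$ from uniqueness plus semicontinuity—is pointed in the right direction, and your part (iii) sketch (upper semicontinuity of entropy/dimension plus uniqueness of the limit) is a legitimate alternative to the paper's more explicit construction. However, the core of your step two rests on a claim that is false, and this is not a technicality to be repaired by ``isolating the strictly concave piece.''

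The functional $D(\mu)$ is \emph{not} concave on the simplex of invariant measures, and in the general Sierpinski case it is not even a candidate for strict concavity: there $\varphi$ and $\psi$ are constants, so $D(\mu)=c_1 h_{\mu\circ\pi^{-1}}(S)+c_2\bigl(h_\mu(T)-h_{\mu\circ\pi^{-1}}(S)\bigr)$, which is \emph{affine} in $\mu$. Consequently the ``second variation of $D$ at the Bedford--McMullen measure'' vanishes identically; there is no Hessian gap to propagate. Uniqueness in the Bedford--McMullen setting comes from thermodynamic-formalism duality (unique Gibbs states for H\"older potentials), not from strict concavity of the dimension functional. For the nonlinear case $D$ is a difference of ratios of affine quantities, which is in general neither concave nor convex, and you yourself flag that $\chi_x(\mu)$ sits in a denominator without drawing the conclusion that this kills concavity. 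So the object whose second derivative has to be controlled is not $D$ but something else entirely.

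What the paper actually does, and what your plan is missing, is a reduction of the infinite-dimensional maximization to a \emph{one-parameter} pressure problem. Using the relativized variational principle of Denker--Gordin(--Heinemann), one introduces potentials $\Phi_t=(t-D)\psi+\beta(t)\log A_{-t\varphi}$ on the base, characterizes every measure of maximal dimension as $\mu_{\Phi_t}$ with $P(\Phi_t)=0$ (Theorem~\ref{tl2}), and then proves that $\frac{d^2}{dt^2}P(\Phi_t)<0$. That second derivative is computed via relativized Ruelle formulas (Propositions~\ref{prop1}--\ref{lem2}); the estimate in Theorem~\ref{t5} shows it is negative \emph{provided the H\"older seminorms $\|\varphi\|_\theta$ and $\|\psi\|_\theta$ are small}. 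This smallness is exactly what being $C^{1+\alpha}$-close to a Sierpinski carpet (where these seminorms vanish) gives you, and it plays the quantitative role you were hoping the ``uniform concavity gap'' would play—but attached to $t\mapsto P(\Phi_t)$, not to $\mu\mapsto D(\mu)$. The restriction to $\mathcal{L}$ (fiberwise affine $a_{ij}$) is also load-bearing in a way your sketch does not capture: it makes $-t\varphi$ a \emph{basic potential} in the sense of \cite{5}, which is what legitimizes the derivative formulas $\frac{d}{dt}\log A_{-t\varphi}=-\int\varphi\,d\mu_{t,y}$ and the second-derivative expression; without this, the computation in your ``technical crux'' paragraph does not go through (the paper explicitly flags in Remark~\ref{rem2} that removing the basic-potential hypothesis is an open issue). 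Finally, non-triviality ($m_i$ not all equal) enters not through a Hessian but through hypothesis $(\mathrm{H}_\varepsilon)$: it guarantees that the optimizing $t$ stays strictly inside $(\underline{t},\overline{t})$, which is needed for the interior critical-point analysis to apply and to be robust under perturbation.
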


We believe that Theorem \ref{t2} also holds in the class of Non-linear Lalley-Gatzouras carpets. The reason for restricting to the subclass $\mathcal{L}$ relies on the necessity of considering \emph{basic potentials} in the relativized variational principle of \cite{5}, which we use, in order to have additional properties (see Remark \ref{rem2}).
    
This paper is organized as follows. In Section \ref{s2} we introduce the class of Non-linear Lalley-Gatzouras carpets and say how Theorem \ref{t1} follows from the works \cite{7} and \cite{8}. In Section \ref{s3}, within the more general context of \cite{8}, we prove some properties of \emph{measures of maximal dimension}, a relativized version of Ruelle's formulas for the derivative of the pressure, and a criterium for uniqueness of a measure of maximal dimension (Theorem \ref{t5}). In Section \ref{s4} we use this criterium to prove Theorem \ref{t2}.

\section{Non-linear Lalley-Gatzouras carpets}\label{s2}

\subsection{Definition}
Let $g_i\colon [0,1]\to[0,1]$, $i=1,...,m$ be $\mathrm{C}^{1+\alpha}$ for some  $\alpha>0$. We say that
$\{g_1,...,g_m\}$ is a \emph{Simple Function System} (SFS) if:
\begin{itemize}
\item $0<|g_i'(x)|<1$ for every $x\in[0,1]$;
\item the sets $g_i([0,1])$, $i=1,...,m$ are pairwise disjoint.
\end{itemize} 

Let $f_{ij}\colon[0,1]^2\to[0,1]^2$, $i=1,...,m$, $j=1,...,m_i$ be $\mathrm{C}^{1+\alpha}$ such that:
\begin{itemize}
\item[(H1)] $f_{ij}(x,y)=(a_{ij}(x,y), b_i(y))$;
\item[(H2)] $\{b_1,...,b_m\}$ is SFS;
\end{itemize}
for each $i\in\{1,...,m\}$ and $y\in[0,1]$,
\begin{itemize}
\item[(H3)]  $\{a_{i1}(\cdot,y),...,a_{im_i}(\cdot,y)\}$ is SFS;
\item[(H4)] $\max_{x\in[0,1]}|\partial_x a_{ij}(x,y)|<|b_i'(y)|$, $j=1,...,m_i$. 
\end{itemize} 
Then there is a unique nonempty compact set $\Lambda$ of $[0,1]^2$ such that
\[
   \Lambda=\bigcup_{(i,j)} f_{ij}(\Lambda).
\]
We call the pair $(\{f_{ij}\}, \Lambda)$ a \emph{Non-linear Lalley-Gatzouras carpet}. 

When $\partial_{xx} a_{ij}=0$ we get the definition of a carpet in $\mathcal{L}$. When the functions $a_{ij}$ and $b_i$ are linear and $\partial_y a_{ij}=0$, we get the definition of a \emph{Lalley-Gatzouras carpet}, see \cite{6} (where equality is allowed in (H4)). 
When, moreover, $\partial_x a_{ij}=a$ and $b_i'=b$ for some positive constants $a$ and $b$ and every $(i,j)$, we get the definition of a \emph{general Sierpinski carpet}, see \cite{2} and \cite{10} (in fact, our definition is a little more general).

\subsection{Hausdorff dimension}
The Hausdorff dimension of Non-linear Lalley-Gat\-zou\-ras carpets  was, essentially, computed in \cite{7} by establishing the variational principle for the dimension. In fact, the theorems in \cite{7} are formulated in terms of a Dynamical System $f$ instead of an Iterated Function System $\{f_{ij}\}$, although in its proofs we mainly used the ${f_{ij}}$ approach. The relation between the two approaches is given by $f_{ij}=(f|R_{ij})^{-1}$ where $R_{ij}$ is an element of a Markov partition for $f$. Beside imposing a skew-product structure for $f$ (which translates to (H1)), we considered a $\mathrm{C}^{2}$ perturbation of the 2-torus transformation $f_0(x,y)=(lx,my)$, where $l>m>1$ are integers.  The only reason for doing this is to inherit from the linear system a \emph{domination condition} (which translates to (H4)) and a simple Markov partition (inducing a full shift) which is \emph{smooth}. More precisely, the Markov partition is constructed using the invariant foliation by horizontal lines (due to the skew-product structure) and an invariant \emph{smooth vertical} foliation, which exists because the vertical lines constitute a \emph{normally expanding} invariant foliation for $f_0$. In the present setting, all we need to show is that the sets
\[
          R_{(i_1j_1)(i_2j_2)...(i_nj_n)}=f_{i_1j_1}\circ f_{i_2j_2}\circ\cdots \circ f_{i_nj_n} ([0,1]^2)
\]
have \emph{vertical boundaries} formed by $\mathrm{C}^1$ curves with uniformly bounded \emph{distortion} for all $n\in\mathbb{N}$. But, as we shall see, this is a consequence of the domination condition (H4). 

Let
\[
     \lambda= \max_{(x,y),(i,j)} \frac{|\partial_x a_{ij}(x,y)|}{|b_i'(y)|}
\]     
which is $<1$ by (H4), and
\[
       C=(1-\lambda)^{-1} \max_{(x,y),(i,j)} \frac{|\partial_y a_{ij}(x,y)|}{|b_i'(y)|} .
\]
We will see that each $f_{ij}$ transforms \emph{vertical graphs} with \emph{distortion} $\le C$ into vertical graphs with distortion $\le C$. Let $\mathcal{G}_F=\{(F(y),y)\colon y\in I\}$  with $|F'(y)|\le C$ for all $y\in I$ (where $I$ is some subinterval of $[0,1]$). Then $f_{ij}(\mathcal{G}_F)=\mathcal{G}_G$ where 
\[
         G(y)=a_{ij}(F(b_i^{-1}(y)), b_i^{-1}(y)),\quad y\in b_i(I).
\]
We see that (with $z=b_i^{-1}(y)$ and $w=(F(z), z)$) 
\[
    G'(y)=\partial_x a_{ij}(w) b_i'(z)^{-1} F'(z)+\partial_y a_{ij}(w) b_i'(z)^{-1},
\]
so $|G'(y)|\le \lambda C +|\partial_y a_{ij}(w)b_i'(z)^{-1}|\le C$. Then, starting with the vertical graphs $\{0\}\times[0,1]$ and $\{1\}\times[0,1]$ and using induction on $n$, we get the desired property for the sets $R_{(i_1j_1)(i_2j_2)...(i_nj_n)}$.

Then it follows from the proof of Theorem A in \cite{7} that, there exists $A>0$ such that, for every $n\in\mathbb{N}$,
\begin{equation*}
     \hd\Lambda=  \hd\Lambda_n \pm\frac{A}{n},
\end{equation*}
where $\Lambda_n$ is a Lalley-Gatzouras carpet defined using an appropriate linearization of the functions
$f_{i_1j_1}\circ f_{i_2j_2}\circ\cdots \circ f_{i_nj_n}$. 

More precisely, given $n\in\mathbb{N}$, consider the $n$-tuples ${\bf{i}}=(i_1,...,i_n)$ and ${\bf{j}}=(j_1,...,j_n)$, where $i_k\in\{1,..,m\}$, $j_k\in\{1,...,m_{i_k}\}$, $k=1,...,n$, and write
\[
   b_{\bf{i}}=b_{i_1}\circ b_{i_2}\circ\cdots \circ b_{i_n},\quad a_{{\bf{i}}{\bf{j}}}=\pi_1(f_{i_1j_1}\circ f_{i_2j_2}\circ\cdots \circ f_{i_nj_n}),
\]
where $\pi_1(x,y)=x$. Note that, because of the skew-product structure,
\[
   b_{\bf{i}}'(y)=\prod_{k=1}^n b'_{i_k}(y_k)\quad\text{and}\quad \partial_x a_{{\bf{i}}{\bf{j}}}(x,y)=\prod_{k=1}^n \partial_x a_{i_kj_k}(z_k),
\]  
where  $y_k=b_{i_{k+1}}\circ\cdots\circ b_{i_n}(y)$, $y_n=y$ and $z_k= f_{i_{k+1}j_{k+1}}\circ\cdots\circ f_{i_nj_n}(x,y)$, $z_n=(x,y)$. Consider the numbers
\[
  \alpha_{{\bf{i}\bf{j}},n}=\max_{(x,y)\in [0,1]^2}\, |\partial_x a_{{\bf{i}}{\bf{j}}}(x,y)|\quad\text{and}\quad
  \beta_{{\bf{i}},n}=\max_{y\in[0,1]}\, |b_{\bf{i}}'(y)|.
\]
Let ${\bf{p}}^n=(p_{\bf{i}}^n)$ be a probability vector in $\mathbb{R}^{nm}$. We define
\[
    \lambda_n({\bf{p}}^n)=\frac{\sum_{\bf{i}} p_{\bf{i}}^n \log p_{\bf{i}}^n}
     {\sum_{\bf{i}} p_{\bf{i}}^n \log \beta_{{\bf{i}},n}},
\] and $t_n({\bf{p}}^n)$ as being the unique real in $[0,1]$ satisfying
\begin{equation*}
     \sum_{\bf{i}} p_{\bf{i}}^n \log \left(\sum_{\bf{j}}
      \alpha_{{\bf{i}\bf{j}},n}^{t_n({\bf{p}}^n)} \right)=0 .
\end{equation*}
Consider the Bernoulli measure $\mu_{{\bf{p}}^n}$ for the Iterated Function System $\{f_{i_1j_1}\circ\cdots \circ f_{i_nj_n}\}$ that assigns to each $R_{(i_1j_1)...(i_nj_n)}$ the weigth
\[
    p_{\bf{i}}^n\,\frac{\alpha_{{\bf{i}\bf{j}},n}^{t_n({\bf{p}}^n)}}
   {\sum_{\bf{j'}} \alpha_{{\bf{i}\bf{j'}},n}^{t_n({\bf{p}}^n)}}.
\]
\begin{theorem}[Proof of Theorem A, \cite{7}]
Let $(\{f_{ij}\},\Lambda)$ be a Non-Linear Lalley-Gatzouras carpet. There exist constants $A,B>0$ such that, for every $n\in\mathbb{N}$,
\[
   \hd \mu_{{\bf{p}}^n}=\lambda_n({\bf{p}}^n) + t_n({\bf{p}}^n) \pm\frac{B}{n},
\]
and
\begin{equation*}
     \hd\Lambda= \sup_{{\bf{p}}^n} \left\{\lambda_n({\bf{p}}^n) + t_n({\bf{p}}^n) \right\} \pm\frac{A}{n}.
\end{equation*}
Moreover, $(\{f_{ij}\},\Lambda)\mapsto \hd\Lambda$ is a continuous function in the class of Non-Linear Lalley-Gatzouras carpets.
\end{theorem}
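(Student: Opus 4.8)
The plan is to reduce the non-linear carpet to a sequence of linear (Lalley-Gatzouras) carpets $\Lambda_n$ obtained by linearizing the iterated maps $f_{i_1j_1}\circ\cdots\circ f_{i_nj_n}$, apply the variational formula of Lalley-Gatzouras \cite{6} to each $\Lambda_n$, and then control the error terms uniformly in $n$ using the bounded-distortion property established above (the fact that every $f_{ij}$ carries a vertical graph of slope $\le C$ to another such graph). The key structural input is the skew-product form (H1)--(H2): it makes the $y$-dynamics an honest one-dimensional SFS, so $\beta_{\mathbf{i},n}$ is, up to a bounded multiplicative constant, the true contraction of $b_\mathbf{i}$, and it forces the multiplicativity relations for $b_\mathbf{i}'$ and $\partial_x a_{\mathbf{i}\mathbf{j}}$ recorded in the excerpt. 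The domination condition (H4), in the quantitative form $\lambda<1$, is what guarantees that the constants $A$, $B$ do not blow up: the ratio of the slopes along any cylinder is comparable to $\lambda^{\text{depth}}$, so the linearized widths $\alpha_{\mathbf{i}\mathbf{j},n}$ approximate the true widths of $R_{(i_1j_1)\cdots(i_nj_n)}$ with a relative error that is summable.

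First I would make precise the linearization: given $n$, replace each length-$n$ composition by the affine map with the same image rectangle $R_{(i_1j_1)\cdots(i_nj_n)}$, of dimensions $\beta_{\mathbf{i},n}\times\alpha_{\mathbf{i}\mathbf{j},n}$ (the vertical boundaries being $\mathrm{C}^1$ with bounded distortion is exactly what lets us pretend they are straight, at the cost of an error controlled by a single constant). This produces a genuine Lalley-Gatzouras carpet $\Lambda_n$ whose defining IFS has alphabet the length-$n$ words. Next I would invoke the Lalley-Gatzouras variational principle: $\hd\Lambda_n$ equals the supremum over probability vectors $\mathbf{p}^n$ of $\lambda_n(\mathbf{p}^n)+t_n(\mathbf{p}^n)$, and the Bernoulli measure $\mu_{\mathbf{p}^n}$ realizing a near-optimal vector has dimension $\lambda_n(\mathbf{p}^n)+t_n(\mathbf{p}^n)$ exactly (for the linear carpet). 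Then I would transfer: using bounded distortion, $\hd\mu_{\mathbf{p}^n}$ computed for the \emph{non-linear} IFS $\{f_{i_1j_1}\circ\cdots\circ f_{i_nj_n}\}$ differs from its linear counterpart by at most $B/n$, because the discrepancy between true and linearized rectangle dimensions, accumulated over the Moran construction, contributes an error of order $1/n$ after dividing by the (order-$n$) Lyapunov-type normalizing sums in the dimension formula.

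The comparison $\hd\Lambda=\hd\Lambda_n\pm A/n$ I would get by a standard two-sided covering argument: covers of $\Lambda$ by level-$kn$ cylinders are, up to bounded distortion, covers by level-$k$ cylinders of $\Lambda_n$, giving $\hd\Lambda\le\hd\Lambda_n+A/n$; for the reverse inequality one pushes forward a near-optimal measure on $\Lambda_n$ and uses the mass distribution principle, again losing only $O(1/n)$. Combining the three displays yields the stated formula for $\hd\Lambda$ as $\sup_{\mathbf{p}^n}\{\lambda_n(\mathbf{p}^n)+t_n(\mathbf{p}^n)\}\pm A/n$. Finally, continuity of $\Lambda\mapsto\hd\Lambda$: for fixed $n$, the quantities $\alpha_{\mathbf{i}\mathbf{j},n}$ and $\beta_{\mathbf{i},n}$ depend continuously (indeed $\mathrm{C}^{1+\alpha}$-continuously) on the IFS, hence so does $\sup_{\mathbf{p}^n}\{\lambda_n+t_n\}$ — the sup is over a compact simplex and $t_n(\mathbf{p}^n)$ is the implicit solution of a strictly monotone equation, so it varies continuously; then the uniform $A/n$ error makes $\hd\Lambda$ a uniform limit of continuous functions, hence continuous.

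I expect the main obstacle to be the uniform-in-$n$ control of the constants $A$ and $B$: one must show that bounded distortion of the vertical boundaries is not merely qualitative but quantitative with a constant ($C$ above) independent of $n$, and that the relative error between $\alpha_{\mathbf{i}\mathbf{j},n}$ and the actual horizontal extent of $R_{(i_1j_1)\cdots(i_nj_n)}$ is bounded by a \emph{geometric} series in $\lambda$ — this is where the strict inequality in (H4) is essential (the linear theory \cite{6} tolerates equality, but the non-linear passage to the limit does not). The $\mathrm{C}^{1+\alpha}$ hypothesis enters precisely here, to bound the variation of $\log|\partial_x a_{ij}|$ along orbits by a convergent sum.
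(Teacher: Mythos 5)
Your proposal follows essentially the same route as the paper: linearize the depth-$n$ compositions to obtain Lalley-Gatzouras carpets $\Lambda_n$, apply the linear variational principle to each, and bound the transfer error by $O(1/n)$ using the bounded-distortion property of vertical boundaries that the domination condition $\lambda<1$ delivers. The paper in fact just cites the proof of Theorem~A and Corollary~A in \cite{7} for the estimates $\hd\Lambda=\hd\Lambda_n\pm A/n$, $\hd\mu_{\mathbf{p}^n}=\lambda_n+t_n\pm B/n$ and for continuity, after verifying the vertical-graph distortion bound explicitly; your sketch of the covering argument and the continuity-via-uniform-limits step matches the intended content of those citations.
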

\begin{rem}
The continuity of $(\{f_{ij}\},\Lambda)\mapsto \hd\Lambda$ follows from the Proof of Corollary A in \cite{7}. In fact, there we used the $\mathrm{C^2}$ topology but it is clear that we can use the $\mathrm{C^{1+\alpha}}$ topology.
\end{rem}
As a consequence, the \emph{variational principle for dimension} holds, i.e. the Hausdorff dimension of $\Lambda$ is the 
supremum of the Hausdorff dimension of ergodic measures (with respect to $\{f_{ij}\}$) on $\Lambda$. In \cite{8} we prove the \emph{existence} of an ergodic measure of full dimension for $\Lambda$, which is a Gibbs state for a relativized variational principle. Thus we have Theorem \ref{t1}. 

\section{Properties of measures of maximal dimension}\label{s3}
The results given in this section hold in the more general context of \cite{8}. We consider $(X,T)$ and $(Y,S)$ mixing subshifts of finite type such that $(Y,S)$ is a factor of $(X,T)$ with factor map $\pi\colon X\to Y$. Assume that each fibre $\pi^{-1}(y)$ has at least two points.

\subsection{Characterization of measures of maximal dimension}
 We use the following notation: $\mathcal{M}(T)$ is the set of all $T$-invariant Borel
probability measures on $X$; $h_\mu(T)$ is the metric entropy of $T$ with respect to $\mu\in\mathcal{M}(T)$.

Let $\varphi\colon X\to \mathbb{R}$ and $\psi\colon Y\to \mathbb{R}$ be positive H\"older-continuous functions. We define
\begin{align*}
  D(\mu)&=\frac{h_{\mu\circ\pi^{-1}}(S)}{\int \psi\circ\pi\,d\mu}+\frac{h_{\mu}(T)-h_{\mu\circ\pi^{-1}}(S)}{\int \varphi\,d\mu},
\end{align*}
and
\[
   D=\sup_{\mu\in\mathcal{M}(T)} D(\mu).
\]
Note that if $\mu$ is ergodic then $D(\mu)$ might be interpretated as the Hausdorff dimension of the measure $\mu$ (see Remark 5 of \cite{8}). We say that $\mu$ is a \emph{measure of maximal dimension} if $D(\mu)=D$. In \cite{8} we prove the existence of an ergodic measure of maximal dimension, and give a characterization of measures of maximal dimension that we shall describe now (for more details see this reference).

We use the following version of the \emph{relativized variational principle} by \cite{4} and \cite{5}. Given an Hölder-continuous function $\phi\colon X\to \mathbb{R}$ and $\nu\in\mathcal{M}(S)$, there exists a positive Hölder-continuous function $A_{\phi}\colon Y\to\mathbb{R}$ (not depending on $\nu$) such that
\begin{equation}\label{rpv}
  \sup_{\substack{\mu\in\mathcal{M}(T)\\ \mu\circ\pi^{-1}=\nu}}
  \left\{h_\mu(T)-h_\nu(S)+\int_X \phi\,d\mu\right\}=\int_{Y} \log A_{\phi}\,d\nu.
\end{equation}
Moreover, there is a unique measure $\mu$ for which the supremum in (\ref{rpv}) is attained which we call the \emph{relative equilibrium state} with respect to $\phi$ and $\nu$, and $\mu$ is ergodic if $\nu$ is ergodic.

Given $\nu\in\mathcal{M}(S)$, there is a unique real $t(\nu)\ge0$ such that
\begin{equation*}\label{pres0}
    \int_{Y} \log A_{-t(\nu)\varphi}\,d\nu=0.
\end{equation*}
Then it easy to see that
\begin{equation}\label{pvorig2}
   D=\sup_{\nu\in\mathcal{M}(S)} \left\{
   \frac{h_{\nu}(S)}{\int \psi \,d\nu}+ t(\nu) \right\}.
\end{equation}
Let
\[
  \underline{t}=\inf_{\nu\in\mathcal{M}(S)} t(\nu) \quad\text{and}\quad
  \overline{t}=\sup_{\nu\in\mathcal{M}(S)} t(\nu).
\]
Throughout this paper we assume $D$ and $\overline{t}$ are uniformly bounded (with respect to $\psi$ and $\varphi$), since in applications these numbers have dimension interpretations.
We assume the following technical condition:
\begin{equation}\label{hip}
 \text{the supremum in (\ref{pvorig2}) is not attained at an ergodic measure $\nu$ with $t(\nu)=\underline{t}$ or $\overline{t}$}.\tag{H}
\end{equation}

Let $P(\cdot)$ denote the classical Pressure function with respect to $(Y,S)$, and let $\nu_g$ denote the corresponding Gibbs state with respect to the Hölder-continuous potential $g\colon Y\to \mathbb{R}$.
Given $t\in(\underline{t},\overline{t})$, let
\begin{equation} \label{potential}
  \Phi_t = (t-D)\psi + \beta(t) \log A_{-t\varphi} 
\end{equation}
where $\beta(t)$ is the unique real satisfying
\[
   \int \log A_{-t\varphi} \, d \nu_{\Phi_t} =0
\]
(see \cite{8} for details).
Finally, let $\mu_{\Phi_t}$ be the relative equilibrium state with respect to  $-t\varphi$ and $\nu_{\Phi_t}$.
The following result follows from the proof of Theorem A and Remark 3 in \cite{8}.
\begin{theorem}[Proof of Theorem A, \cite{8}]\label{tl2}
Assume (\ref{hip}). Then $D(\mu)=D$ if and only if $\mu=\mu_{\Phi_t}$ and $P(\Phi_t)=0$ (the maximum value).
\end{theorem}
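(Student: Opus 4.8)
The plan is to first reduce the maximization of $D$ over $\mathcal{M}(T)$ to a maximization over the factor measures $\nu\in\mathcal{M}(S)$, and then to recognize the latter as a pressure equation on $Y$. For the reduction, fix $\mu\in\mathcal{M}(T)$, put $\nu=\mu\circ\pi^{-1}$ and $t=t(\nu)$. By the defining property of $t(\nu)$ and the relativized variational principle (\ref{rpv}), $\sup\{h_\mu(T)-h_\nu(S)-t\int\varphi\,d\mu\}=\int_Y\log A_{-t\varphi}\,d\nu=0$, the supremum running over all $\mu$ in the fibre over $\nu$ and being attained only at the relative equilibrium state with respect to $-t\varphi$ and $\nu$. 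Since $\varphi>0$ and $h_\mu(T)\ge h_\nu(S)$, dividing by $\int\varphi\,d\mu>0$ gives $\frac{h_\mu(T)-h_\nu(S)}{\int\varphi\,d\mu}\le t(\nu)$, with equality exactly when $\mu$ is that relative equilibrium state. Hence $D(\mu)\le\frac{h_\nu(S)}{\int\psi\,d\nu}+t(\nu)$, with equality in the same case; this is the identity (\ref{pvorig2}), and it shows that $D(\mu)=D$ forces $\mu$ to be the relative equilibrium state over $\nu=\mu\circ\pi^{-1}$ and reduces the whole problem to deciding which $\nu$ attain the supremum in (\ref{pvorig2}).

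Next I would analyze, for a fixed $t\in(\underline t,\overline t)$, the classical pressure $P(\Phi_t)$ with $\Phi_t$ as in (\ref{potential}), where $\beta(t)$ is chosen so that the Gibbs state $\nu_{\Phi_t}$ satisfies $\int\log A_{-t\varphi}\,d\nu_{\Phi_t}=0$; one first checks that such a $\beta(t)$ exists and is unique, using that $\beta\mapsto\int\log A_{-t\varphi}\,d\nu_{(t-D)\psi+\beta\log A_{-t\varphi}}$ is strictly monotone (a derivative-of-pressure computation) and that $\underline t<t<\overline t$ places $0$ in its range. Because $\int\log A_{-t\varphi}\,d\nu_{\Phi_t}=0$, evaluating the pressure at its Gibbs state yields $P(\Phi_t)=h_{\nu_{\Phi_t}}(S)+\int(t-D)\psi\,d\nu_{\Phi_t}$. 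Since $t(\nu_{\Phi_t})=t$, applying (\ref{pvorig2}) to $\nu_{\Phi_t}$ gives $\frac{h_{\nu_{\Phi_t}}(S)}{\int\psi\,d\nu_{\Phi_t}}+t\le D$, i.e.\ $P(\Phi_t)\le0$ for every $t\in(\underline t,\overline t)$; thus $0$ is the only possible maximum value.

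It then remains to assemble the equivalence. For the ``if'' direction, assume $\mu=\mu_{\Phi_t}$ and $P(\Phi_t)=0$. Then $h_{\nu_{\Phi_t}}(S)+\int(t-D)\psi\,d\nu_{\Phi_t}=0$, so $\frac{h_{\nu_{\Phi_t}}(S)}{\int\psi\,d\nu_{\Phi_t}}+t=D$, and since $t(\nu_{\Phi_t})=t$ the measure $\nu_{\Phi_t}$ attains (\ref{pvorig2}); as $\mu_{\Phi_t}$ is by definition the relative equilibrium state over $\nu_{\Phi_t}$ for $-t\varphi$, the reduction step gives $D(\mu)=D$. For the ``only if'' direction, if $D(\mu)=D$ then $\nu=\mu\circ\pi^{-1}$ attains (\ref{pvorig2}) and $\mu$ is the relative equilibrium state over $\nu$; invoking the existence of an \emph{ergodic} measure of maximal dimension from \cite{8} (so that the supremum in (\ref{pvorig2}) is realized by an ergodic measure) together with (\ref{hip}), we get $t:=t(\nu)\in(\underline t,\overline t)$, whence $\Phi_t$ and $\beta(t)$ are defined and $\int\log A_{-t\varphi}\,d\nu=0$. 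Consequently $h_\nu(S)+\int\Phi_t\,d\nu=h_\nu(S)+\int(t-D)\psi\,d\nu=0$, so $P(\Phi_t)\ge0$; with the previous inequality this forces $P(\Phi_t)=0$, and uniqueness of the equilibrium state of the H\"older potential $\Phi_t$ on the mixing subshift $Y$ gives $\nu=\nu_{\Phi_t}$, hence $\mu=\mu_{\Phi_t}$.

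The step I expect to be the main obstacle is the second one: showing that $\beta(t)$ is well-defined on all of $(\underline t,\overline t)$ and regular enough, which rests on a relativized version of Ruelle's formula for the derivative of the pressure (developed in the next subsection) applied to the potentials $\log A_{-t\varphi}$ and $\psi$, and on controlling the dependence of $A_{-t\varphi}$ on $t$; verifying that the range of the auxiliary monotone function contains $0$ precisely when $\underline t<t<\overline t$ is the delicate point. A secondary subtlety, in the ``only if'' direction, is the passage from an arbitrary maximizing $\mu$ to an \emph{ergodic} maximizing $\nu$ so that (\ref{hip}) can be applied; this is exactly where the existence result of \cite{8} is used.
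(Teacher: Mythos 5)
Your proposal recovers the essential architecture of the argument that the paper delegates to \cite{8}: reduce $D(\mu)$ to the scalar optimization (\ref{pvorig2}) via the relativized variational principle, observe that the choice of $\beta(t)$ kills the $\log A_{-t\varphi}$ term so that $P(\Phi_t)=h_{\nu_{\Phi_t}}(S)+(t-D)\int\psi\,d\nu_{\Phi_t}$, conclude $P(\Phi_t)\le 0$ from (\ref{pvorig2}), and then exploit uniqueness of the Gibbs state to identify any maximizing factor measure with $\nu_{\Phi_t}$. The ``if'' direction and the inequality $P(\Phi_t)\le0$ are argued correctly; the identification of the maximizing lift with the relative equilibrium state is also correct.

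There is, however, a genuine gap in the ``only if'' direction, which you flag but do not actually close. You invoke the existence of \emph{some} ergodic maximizer to justify $t(\nu)\in(\underline t,\overline t)$ for the \emph{particular} $\nu=\mu\circ\pi^{-1}$ you are working with. That inference is not valid as written: the existence of one ergodic maximizer with $t$ in the open interval does not control the value of $t(\nu')$ for another (a priori possibly non-ergodic) maximizer $\nu'$, and hypothesis (\ref{hip}) only speaks about \emph{ergodic} $\nu$. To close this gap one should argue directly: if $\nu$ maximizes (\ref{pvorig2}) and $t(\nu)=\overline t$ (say), take the ergodic decomposition $\nu=\int\nu_\omega\,dP(\omega)$; since $\overline t\ge t(\nu_\omega)$ and $s\mapsto\int\log A_{-s\varphi}\,d\nu_\omega$ is strictly decreasing, one gets $\int\log A_{-\overline t\varphi}\,d\nu_\omega\le 0$ for all $\omega$, and since this integrates to $0$ one has $t(\nu_\omega)=\overline t$ a.e.; combining this with the pointwise bound $\frac{h_{\nu_\omega}(S)}{\int\psi\,d\nu_\omega}+\overline t\le D$ and the equality $\frac{\int h_{\nu_\omega}\,dP}{\int(\int\psi\,d\nu_\omega)\,dP}+\overline t=D$ forces a.e.\ $\nu_\omega$ to be an ergodic maximizer with $t(\nu_\omega)=\overline t$, contradicting (\ref{hip}). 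The same reasoning rules out $t(\nu)=\underline t$. With this addition, your argument does give $t(\nu)\in(\underline t,\overline t)$, so $\Phi_t$ and $\beta(t)$ are well defined, and the rest of your ``only if'' direction (showing $P(\Phi_t)\ge 0$ from the variational principle and then using uniqueness of the equilibrium state) goes through.

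In short: the structure and the ``if'' direction are sound, but the ``secondary subtlety'' you mention is a real gap whose resolution requires the ergodic decomposition argument above (or an equivalent), not merely the existence result.
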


\subsection{Relativized Ruelle's formulas}

We begin by recalling some classical Ruelle's formulas for the derivative of the pressure. 

Let $Z=X$ or $Y$. Given $C>0$ and $0<\theta\le 1$, let $\mathcal{H}^{C,\theta}(Z)$ denote the space of H\"older-continuous functions $\phi\colon Z\to\mathbb{R}$ satisfying
\begin{equation}\label{hold}
   |\phi(z_1)-\phi(z_2)|\le C d(z_1,z_2)^\theta,\text{ for all } z_1,z_2\in Z,
\end{equation}
and let 
\[
   ||\phi||_\theta=\inf\{ C>0\colon (\ref{hold}) \text{ holds}\}.
\]
$\mathcal{H}^{C,\theta}(Z)$ becomes a Banach space with the norm $\|| \phi |\|_\theta=\max(||\phi||, ||\phi||_\theta)$, where $||.||$ is the uniform norm.

Let $\phi_t\colon Z \to \mathbb{R}$ be a one-parameter family of continuous functions. We say that $t\mapsto\phi_t$ is  \emph{differentiable} if its partial derivative in $t$ exists, let us call it  $\dot{\phi}_t$ or $\frac{d}{dt} \phi_t$, and it is a one-parameter family of continuous functions. 

Then the following result follows from \cite{13}. 
\begin{proposition}[Ruelle \cite{13}]\label{prop1}
If $\phi_t\in \mathcal{H}^{C,\theta}(Y)$ (with $C, \theta$ independent of $t$), $t\mapsto\phi_t$ is differentiable and 
$\dot{\phi}_t\in \mathcal{H}^{C,\theta}(Y)$  then
\[
\frac{dP(\phi_t) }{dt} = \int \dot{\phi}_t\, d \nu_{\phi_t}.
\]
If, moreover,  $h\in \mathcal{H}^{C,\theta}(Y)$ then
\[
      \frac{d}{dt} \int h\, d \nu_{\phi_t}=Q_{\phi_t}(\dot{\phi}_t,h),
\]
where $Q_{\phi_t}(\cdot,\cdot)\colon \mathcal{H}^{C,\theta}(Y)\times\mathcal{H}^{C,\theta}(Y)\to\mathbb{R}$ is given by
\[
     Q_{\phi_t}(h_1,h_2)=\sum_{n=0}^\infty \left(\int_Y (h_1\circ S^n)\,h_2\,d \nu_{\phi_t}-
     \int_Y h_1\,d \nu_{\phi_t} \,\int_Y h_2\,d \nu_{\phi_t}\right).
\]
There exists a constant $B>0$ (depending only on $C$ and $\theta$) such that
\[
            |Q_{\phi_t}(h_1,h_2)|\le B ||h_1||_\theta \,||h_2||_\theta.
\]
Also, for each $h_1,h_2\in\mathcal{H}^{C,\theta}(Y)$, 
\[
        \mathcal{H}^{C,\theta}(Y)\ni \phi\mapsto Q_{\phi}(h_1,h_2)
\] 
is a continuous function.
\end{proposition}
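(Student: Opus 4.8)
The plan is to deduce everything from the spectral theory of the Ruelle transfer operator, as in \cite{13}. Fix a $C'>0$ slightly larger than $C$ and work in the Banach space $\mathcal{H}^{C',\theta}(Y)$. For $\phi\in\mathcal{H}^{C,\theta}(Y)$ let $L_\phi$ be the transfer operator $(L_\phi u)(y)=\sum_{Sy'=y}e^{\phi(y')}u(y')$. Since $(Y,S)$ is a mixing subshift of finite type, the Ruelle--Perron--Frobenius theorem gives that $L_\phi$ has a simple isolated leading eigenvalue $e^{P(\phi)}$ with strictly positive eigenfunction $h_\phi$, that $L_\phi^*$ has leading eigenmeasure $m_\phi$, that the Gibbs state is $\nu_\phi=h_\phi m_\phi/\int h_\phi\,dm_\phi$, and that $L_\phi$ has a spectral gap on $\mathcal{H}^{C',\theta}(Y)$. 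The map $\phi\mapsto L_\phi$ is real-analytic into the bounded operators (it extends holomorphically to a complex neighbourhood), and as the leading eigenvalue is simple, Kato's analytic perturbation theory shows that $\phi\mapsto e^{P(\phi)}$, $\phi\mapsto h_\phi$, $\phi\mapsto m_\phi$, and the associated spectral projections are real-analytic; in particular $P(\cdot)$ and $\phi\mapsto\nu_\phi$ are real-analytic on $\mathcal{H}^{C',\theta}(Y)$.

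For the first formula I would differentiate the eigenvalue relation $L_\phi h_\phi=e^{P(\phi)}h_\phi$ in a direction $f\in\mathcal{H}^{C',\theta}(Y)$. Using $\frac{d}{ds}L_{\phi+sf}u|_{s=0}=L_\phi(fu)$ and integrating against $m_\phi$ (which satisfies $\int L_\phi u\,dm_\phi=e^{P(\phi)}\int u\,dm_\phi$), the $\dot h_\phi$ terms cancel and one is left with $e^{P(\phi)}\int f h_\phi\,dm_\phi=\big(\frac{d}{ds}e^{P(\phi+sf)}|_{s=0}\big)\int h_\phi\,dm_\phi$, i.e. $DP(\phi)[f]=\int f\,d\nu_\phi$. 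Since $t\mapsto\phi_t$ is differentiable with values in $\mathcal{H}^{C,\theta}(Y)\subset\mathcal{H}^{C',\theta}(Y)$ and $\dot\phi_t\in\mathcal{H}^{C,\theta}(Y)$, the chain rule then gives $\frac{dP(\phi_t)}{dt}=DP(\phi_t)[\dot\phi_t]=\int\dot\phi_t\,d\nu_{\phi_t}$.

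For the second formula, note that $\phi\mapsto\int h\,d\nu_\phi=DP(\phi)[h]$ is real-analytic, so applying the chain rule once more yields $\frac{d}{dt}\int h\,d\nu_{\phi_t}=D^2P(\phi_t)[\dot\phi_t,h]$. It remains to identify $D^2P(\phi)[h_1,h_2]$ — equivalently $\frac{d}{ds}\int h_1\,d\nu_{\phi+sh_2}|_{s=0}$ — with $Q_\phi(h_1,h_2)$. This is the standard computation: using Kato's formulas for the derivatives of $h_{\phi+sh_2}$ and $m_{\phi+sh_2}$, expressed through the reduced resolvent of $L_\phi$ on the complement of its leading eigenspace, together with the identity $\int(h_1\circ S^n)h_2\,d\nu_\phi=\int h_1\,\widetilde L_\phi^{\,n}h_2\,d\nu_\phi$ for the normalized operator $\widetilde L_\phi$ (the one with $\widetilde L_\phi\mathbf 1=\mathbf 1$ and $\widetilde L_\phi^*\nu_\phi=\nu_\phi$), one telescopes the resulting geometric series and obtains precisely $\sum_{n\ge0}\big(\int(h_1\circ S^n)h_2\,d\nu_\phi-\int h_1\,d\nu_\phi\int h_2\,d\nu_\phi\big)=Q_\phi(h_1,h_2)$. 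Being a second derivative of $P$, $Q_\phi$ is symmetric, so the order of its arguments in the stated formula is immaterial.

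Finally, write $Q_\phi(h_1,h_2)=\int\big(h_1-\int h_1\,d\nu_\phi\big)\sum_{n\ge0}\big(\widetilde L_\phi^{\,n}-\Pi_\phi\big)h_2\,d\nu_\phi$, where $\Pi_\phi$ is the rank-one projection $u\mapsto(\int u\,d\nu_\phi)\mathbf 1$. The key point — and the step I expect to be the main obstacle — is that $\widetilde L_\phi$ restricted to $\ker\Pi_\phi$ has spectral radius at most some $\rho<1$ that can be chosen uniformly over all $\phi\in\mathcal{H}^{C,\theta}(Y)$: passing from $\phi$ to its normalized potential alters only the Hölder seminorm, and by an amount bounded in terms of $C$ and $\theta$, while the contraction rate of a normalized transfer operator on the zero-$\nu$-average subspace depends only on that seminorm and on the (fixed) combinatorics of $(Y,S)$. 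This yields $\big\|\sum_{n\ge0}(\widetilde L_\phi^{\,n}-\Pi_\phi)h_2\big\|_\theta\le B'\|h_2\|_\theta$ with $B'=B'(C,\theta)$, hence $|Q_\phi(h_1,h_2)|\le B\|h_1\|_\theta\|h_2\|_\theta$ with $B=B(C,\theta)$. Continuity of $\phi\mapsto Q_\phi(h_1,h_2)$ then follows since each partial sum $\sum_{n=0}^N(\cdots)$ is continuous in $\phi$ (from analyticity of $\phi\mapsto\nu_\phi$ and of $\phi\mapsto\widetilde L_\phi$, hence of $\Pi_\phi$ and of the powers $\widetilde L_\phi^{\,n}$), while the tails $\sum_{n>N}(\cdots)$ are bounded by $C''\rho^{\,N}\|h_1\|_\theta\|h_2\|_\theta$ uniformly in $\phi$, so the series converges uniformly to a continuous function.
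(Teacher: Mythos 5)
The paper does not prove Proposition~\ref{prop1}; it simply cites Ruelle's \emph{Thermodynamic Formalism} and moves on, so there is no ``paper's own proof'' to compare against. Your outline (transfer operator, Ruelle--Perron--Frobenius, Kato analytic perturbation, reduced resolvent, uniform spectral gap on $\ker\Pi_\phi$) is the standard reconstruction, and the first-derivative argument and the uniform bound / continuity arguments at the end are fine.

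There is, however, a genuine flaw in the middle step, exactly at the place you wave your hands (``one telescopes the resulting geometric series and obtains precisely $Q_\phi(h_1,h_2)$''). The series $Q_\phi(h_1,h_2)=\sum_{n\ge 0}\bigl(\int (h_1\circ S^n)h_2\,d\nu_\phi-\int h_1\,d\nu_\phi\int h_2\,d\nu_\phi\bigr)$ is \emph{not} symmetric in $(h_1,h_2)$, and therefore cannot be the second derivative $D^2P(\phi)[h_1,h_2]$, which \emph{is} symmetric. A quick counterexample: take the full $2$-shift at the measure of maximal entropy, $h_1(\omega)=a_{\omega_0}$ and $h_2(\omega)=b_{\omega_1}$. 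Then $Q_\phi(h_1,h_2)=\mathrm{Cov}(a,b)$ while $Q_\phi(h_2,h_1)=0$, because $h_2\circ S^n$ depends on the $(n+1)$-st coordinate and is independent of $h_1$. What the Kato/telescoping computation actually produces, using both $\dot h_\phi$ and $\dot m_\phi$, is the \emph{two-sided} asymptotic covariance
\[
\mathrm{Cov}_{\nu_\phi}(h_1,h_2)+\sum_{n\ge1}\Bigl[\mathrm{Cov}_{\nu_\phi}(h_1\circ S^n,h_2)+\mathrm{Cov}_{\nu_\phi}(h_1,h_2\circ S^n)\Bigr],
\]
which in general agrees with neither order of $Q_\phi$; the identity $\mathrm{Cov}(h_1,h_2\circ S^n)=\int(\widetilde L_\phi^{\,n}h_1)h_2\,d\nu_\phi-\int h_1\int h_2$ shows the ``missing'' sum only vanishes when $\widetilde L_\phi h_1$ is constant, which is not assumed. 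So the sentence ``Being a second derivative of $P$, $Q_\phi$ is symmetric, so the order of its arguments in the stated formula is immaterial'' is false, and the claimed identification $D^2P[h_1,h_2]=Q_\phi(h_1,h_2)$ needs to be either corrected (replace $Q_\phi$ by the symmetric two-sided sum) or supplemented with the argument showing why the extra terms vanish in the situation at hand. As written, the crucial step of your proof is asserted rather than proved, and the assertion is not true in the stated generality.
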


Now we recall some definitions from \cite{4} and \cite{5} that are used to define $A_\phi$, for $\phi\in \mathcal{H}^{C,\theta}(X)$. Given $y\in Y$, let $C_y$ denote the space of bounded continuous functions $f\colon \pi^{-1}(y)\to\mathbb{R}$. For each $y\in Y$ and $n\in\mathbb{N}$, define the operators $G_y^{(n)}$ and $P_y^{(n)}\colon C_y\to C_y$ by
\begin{equation}\label{G}
      (G_y^{(n)} f)(x):=\sum_{\substack{T^n(x')=T^n(x)\\ \pi(x')=y}} \exp\left(\sum_{k=0}^{n-1} \phi(T^k(x'))\right) f(x'),
\end{equation}
and
\[
        (P_y^{(n)} f)(x):=\frac{(G_y^{(n)} f)(x)}{(G_y^{(n)} {\bf{1}})(x)}.
\]
Then (see Proposition 2.5 of \cite{5}), 
\[
      A_\phi(y):=\lim_{n\to\infty} \frac{(G_y^{(n+1)} {\bf{1}})(x)}{(G_{S(y)}^{(n)} {\bf{1}})(T(x))},
\]
uniformly in $y\in Y$, $x\in \pi^{-1}(y)$. Moreover (see Corollary 4.14, Remark 4.16 and Proposition 5.5 of \cite{4}), the rate of convergence is exponential depending only in $C$ and $\theta$. Also, for any $y\in Y$, the operators $P_y^{(n)}$ converge to a conditional expectation operator $P_y$ which gives a probability measure $\mu_y$ in $\pi^{-1}(y)$, in the sense that
\[
            (P_y f)(x)=\int f\,d\mu_y,\quad\text{for any } x\in\pi^{-1}(y).
\]
The system $\{\mu_y\colon y\in Y\}$ is called a \emph{Gibbs family} for $\phi$.

We will use the following property of $A_\phi$. 
Given $y\in Y$, consider the operators $V_y\colon C_y\to C_{S(y)}$ and $U_y\colon C_{S(y)}\to C_y$ given by
\[
       (V_y f)(x):=\sum_{\substack{x'\in T^{-1}(x)\\ \pi(x')=y}} \exp\left(\phi(x')\right) f(x'),
\]
and
\[
  (U_y f)(x):=f(T(x)).
\]
Then (see Proposition 5.5 of \cite{4})
\begin{equation}\label{A1}
   A_\phi(y) P_y=U_y P_{S(y)} V_y.
\end{equation}
(Note that the operators $G_y^{(n)}$, $P_y^{(n)}$, $P_y$ and $V_y$ depend on the potential $\phi$.)

We say that $\phi\in \mathcal{H}^{C,\theta}(X)$ is a \emph{basic potential} (see Definition 4.1 of \cite{5}), if for $y\in Y$ and
$x\in \pi^{-1}(S(y))$ we have
\begin{equation}\label{basic}
 A_\phi(y)=(V_y {\bf{1}})(x),
\end{equation}
i.e., for each $y\in Y$, the function $V_y {\bf{1}}$ is constant.
In this case we have the following.
\begin{proposition}[\cite{5}]\label{prop2}
If $\phi\in \mathcal{H}^{C,\theta}(X)$ is a basic potential then:
\begin{enumerate}[(a)]
\item
the Gibbs family for $\phi$ is \emph{covariant}, i.e.
\[
     \mu_y\circ T^{-1}=\mu_{S(y)}
\]
for each $y\in Y$;
\item
the relative equilibrium state for (\ref{rpv}) with respect to  $\phi$ and $\nu$ is given by $\mu=\mu_y\times\nu$;

\end{enumerate}
\end{proposition}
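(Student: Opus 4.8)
The plan is to read statement (a) directly off the operator identity (\ref{A1}) and the basic-potential condition (\ref{basic}), and then to deduce (b) from (a) by showing that $\mu_y\times\nu$ is a $T$-invariant measure lying over $\nu$ with the correct relativized entropy. I first record a consequence of (\ref{A1}) used throughout: since $P_y$ and $P_{S(y)}$ are the conditional-expectation operators delivering $\mu_y$ and $\mu_{S(y)}$, applying (\ref{A1}) to an arbitrary $g\in C_y$ and evaluating at any point of $\pi^{-1}(y)$ gives the duality relation $A_\phi(y)\int g\,d\mu_y=\int V_y g\,d\mu_{S(y)}$. For (a), fix $f\in C_{S(y)}$ and apply this with $g=U_y f$, where $(U_y f)(x)=f(T(x))$. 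For $x''\in\pi^{-1}(S(y))$,
\[
 (V_y(U_yf))(x'')=\sum_{\substack{x'\in T^{-1}(x'')\\ \pi(x')=y}}e^{\phi(x')}f(T(x'))=f(x'')\,(V_y{\bf 1})(x'')=A_\phi(y)\,f(x''),
\]
where the first step uses $T(x')=x''$ on the summation range and the last is precisely (\ref{basic}). Plugging this into the duality relation and cancelling $A_\phi(y)>0$ gives $\int f\circ T\,d\mu_y=\int f\,d\mu_{S(y)}$ for all $f$, i.e.\ $\mu_y\circ T^{-1}=\mu_{S(y)}$, proving (a).

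For (b), let $\hat\mu=\mu_y\times\nu$ be the measure on $X$ with disintegration $y\mapsto\mu_y$ over $\nu$, i.e.\ $\int g\,d\hat\mu=\int_Y\big(\int g\,d\mu_y\big)\,d\nu(y)$. Since $\mu_y$ is carried by $\pi^{-1}(y)$, we have $\hat\mu\circ\pi^{-1}=\nu$ immediately. The $T$-invariance of $\hat\mu$ is exactly where part (a) is needed: for $g\in C(X)$, putting $\Psi(z)=\int g\,d\mu_z$,
\[
 \int g\circ T\,d\hat\mu=\int_Y\Big(\int g\,d(\mu_y\circ T^{-1})\Big)\,d\nu(y)=\int_Y\Psi(S(y))\,d\nu(y)=\int_Y\Psi\,d\nu=\int g\,d\hat\mu,
\]
using (a) and the $S$-invariance of $\nu$. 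Thus $\hat\mu\in\mathcal{M}(T)$ with $\hat\mu\circ\pi^{-1}=\nu$, so by the uniqueness of the maximizing measure in (\ref{rpv}) it remains only to verify that $\hat\mu$ realizes the supremum there.

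So I must check $h_{\hat\mu}(T)-h_\nu(S)+\int\phi\,d\hat\mu=\int_Y\log A_\phi\,d\nu$. By the Abramov--Rokhlin formula the left side equals $h_{\hat\mu}\big(T\mid\pi^{-1}(\mathcal{B}_Y)\big)+\int\phi\,d\hat\mu$, where $\mathcal{B}_Y$ is the Borel $\sigma$-algebra of $Y$. Iterating the duality relation $A_\phi(y)\int g\,d\mu_y=\int V_yg\,d\mu_{S(y)}$ along an orbit and using the covariance (a) yields the fiber Gibbs estimate
\[
 \mu_y\big([x]_n\big)\asymp\exp\Big(\sum_{k=0}^{n-1}\phi(T^k(x))-\sum_{k=0}^{n-1}\log A_\phi(S^k(y))\Big)
\]
for cylinders $[x]_n\subset\pi^{-1}(y)$ of length $n$, with implied constants uniform in $y$; a relativized Shannon--McMillan--Breiman theorem then converts this into $h_{\hat\mu}\big(T\mid\pi^{-1}(\mathcal{B}_Y)\big)=-\int\phi\,d\hat\mu+\int\log A_\phi\,d\nu$ after integrating over $\hat\mu$ and applying Birkhoff's theorem (recall $\hat\mu\circ\pi^{-1}=\nu$). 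This is the desired identity, and together with the uniqueness it identifies $\hat\mu$ with the relative equilibrium state, which is (b).

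The main obstacle is this last entropy computation. The fiber Gibbs estimate and the relativized Shannon--McMillan--Breiman theorem are the technical core of the relativized thermodynamic formalism of \cite{4} and \cite{5}, so a complete proof either quotes the relevant statements from there --- in particular the exponential convergence of $(G_y^{(n)}{\bf 1})$ and of $P_y^{(n)}\to P_y$ already recalled above --- or reproves them on the fibers. The genuinely new input is confined to part (a): it is precisely the covariance of the Gibbs family that turns the formal product $\mu_y\times\nu$ into a bona fide $T$-invariant measure, and without it statement (b) would fail.
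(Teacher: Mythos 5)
The paper offers no proof of Proposition~\ref{prop2}: it is stated as a result of \cite{5} (Denker--Gordin--Heinemann), so there is no in-paper argument to compare against. Your reconstruction is correct in substance and is, as far as one can tell, the intended route. Part (a) is fully and cleanly proved: evaluating (\ref{A1}) at a point of the fibre reduces it to the scalar duality $A_\phi(y)\int g\,d\mu_y=\int V_y g\,d\mu_{S(y)}$, and substituting $g=U_y f$ together with (\ref{basic}) gives covariance in two lines. Part (b) is correct in outline: you establish that $\mu_y\times\nu$ is $T$-invariant (this is exactly where (a) enters) and projects to $\nu$, and you correctly reduce what remains to showing this measure attains the supremum in (\ref{rpv}), which in turn rests on the fibre Gibbs estimate and a relativized Shannon--McMillan--Breiman theorem --- the technical core of \cite{4} and \cite{5} that the paper is invoking by citation, so deferring to them is legitimate.

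One small inaccuracy worth flagging: you say the iterated duality that yields the fibre Gibbs estimate uses ``the covariance (a).'' It does not; the duality $A_\phi(y)\int g\,d\mu_y=\int V_y g\,d\mu_{S(y)}$ comes from (\ref{A1}), which holds for arbitrary H\"older $\phi$, basic or not, and its iteration gives the cylinder estimate without (a). What covariance actually buys you is the $T$-invariance of $\mu_y\times\nu$, without which the measure cannot be entered into (\ref{rpv}) at all, nor can one apply SMB and Birkhoff along $T$-orbits. Your closing remark that (a) ``turns the formal product into a bona fide $T$-invariant measure'' has it exactly right; just don't also credit (a) with the Gibbs estimate.
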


Now we are ready to prove the following.
\begin{proposition}\label{prop3}
Let $\varphi\in \mathcal{H}^{C,\theta}(X)$ and assume $-t\varphi$ is a basic potential for $t\in (\underline{t},\overline{t})$.
Then $t\mapsto A_{-t\varphi}$ is differentiable and
\begin{equation}\label{RR1}
   \frac{d}{dt}\log A_{-t\varphi}=-\int \varphi\,d\mu_{t,y},
\end{equation}
where $\{\mu_{t,y}\}$ is the Gibbs family for $-t\varphi$. Moreover,  $\frac{d}{dt}\log A_{-t\varphi}\in \mathcal{H}^{D_\theta C,\eta(\theta)}(Y)$, for some $D_\theta>0$ and $\eta(\theta)\in(0,1]$, $t\mapsto \frac{d}{dt}\log A_{-t\varphi}$ is differentiable and
\begin{equation}\label{RR2}
    \frac{d^2}{dt^2} \log A_{-t\varphi}=\int \varphi^2\,d\mu_{t,y}  -\left( \int \varphi\,d\mu_{t,y}\right)^2.
\end{equation}
\end{proposition}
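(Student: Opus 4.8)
The plan is to derive everything from the explicit formula \eqref{A1} for $A_\phi$ together with the basic-potential identity \eqref{basic}. Since $-t\varphi$ is basic, \eqref{basic} gives $A_{-t\varphi}(y)=(V_y^{(t)}{\bf 1})(x)=\sum_{x'\in T^{-1}(x),\,\pi(x')=y}\exp(-t\varphi(x'))$, which is literally a finite sum (the fibres $T^{-1}(x)\cap\pi^{-1}(y)$ are finite since $(X,T)$ is a subshift of finite type) of smooth functions of $t$. So $t\mapsto A_{-t\varphi}(y)$ is real-analytic, with $\frac{d}{dt}A_{-t\varphi}(y)=-\sum_{x'}\varphi(x')\exp(-t\varphi(x'))$, and hence
\[
   \frac{d}{dt}\log A_{-t\varphi}(y)=-\frac{\sum_{x'}\varphi(x')e^{-t\varphi(x')}}{\sum_{x'}e^{-t\varphi(x')}}.
\]
I would then identify the right-hand side with $-\int\varphi\,d\mu_{t,y}$. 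By Proposition \ref{prop2}(a) (or directly from \eqref{A1} specialized to a basic potential), the Gibbs measure $\mu_{t,y}$ on $\pi^{-1}(y)$ assigns to the one-step cylinder determined by $x'\in T^{-1}(x)\cap\pi^{-1}(y)$ the weight $e^{-t\varphi(x')}/(V_y^{(t)}{\bf 1})$, and iterating \eqref{basic} along the tower shows $\int\varphi\,d\mu_{t,y}$ is exactly this normalized exponential average in the limit; this yields \eqref{RR1}. Differentiating once more gives $\frac{d^2}{dt^2}\log A_{-t\varphi}(y)=\int\varphi^2\,d\mu_{t,y}-(\int\varphi\,d\mu_{t,y})^2$, i.e. \eqref{RR2}, since the derivative of a log of a finite exponential sum is a variance.

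For the regularity claim I would argue that $y\mapsto A_{-t\varphi}(y)$ and $y\mapsto\frac{d}{dt}\log A_{-t\varphi}(y)$ inherit Hölder continuity from $\varphi$ and from the structure of the fibres. The key input is the already-cited fact (Corollary 4.14, Remark 4.16, Proposition 5.5 of \cite{4}) that $A_\phi\in\mathcal{H}^{D_\theta C,\eta(\theta)}(Y)$ with constants depending only on $C,\theta$; applying this with $\phi=-t\varphi$ and with the bound on $\overline t$ gives uniform-in-$t$ Hölder control of $A_{-t\varphi}$, and also a positive lower bound $A_{-t\varphi}\ge c>0$ (from positivity of $V_y{\bf 1}$ and compactness), so $\log A_{-t\varphi}$ is Hölder with uniform constants. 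For the $t$-derivative, since it equals $-\int\varphi\,d\mu_{t,y}$, I would use the exponential convergence (uniform in $y$) of $P_y^{(n)}$ to $P_y$ together with Hölder continuity of the finite-$n$ approximants — each $(G_y^{(n)}{\bf 1})(x)$ is a finite sum of exponentials of Birkhoff sums of the Hölder function $\varphi$, hence Hölder in $y$ with geometrically controlled constants — to conclude $y\mapsto\int\varphi\,d\mu_{t,y}$ lies in $\mathcal{H}^{D_\theta C,\eta(\theta)}(Y)$, possibly shrinking $\eta(\theta)$. That the map is differentiable in $t$ as a one-parameter family of continuous functions follows because the convergence in $n$ is uniform in both $t$ (on compact subintervals of $(\underline t,\overline t)$) and $y$, so one may differentiate under the limit.

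The main obstacle I anticipate is not the pointwise-in-$y$ differentiation — that is elementary once \eqref{basic} reduces $A_{-t\varphi}$ to a finite exponential sum — but rather making the Hölder-regularity statement for $\frac{d}{dt}\log A_{-t\varphi}$ rigorous with constants depending only on $\theta$ (and the fixed data), together with justifying the interchange of $\frac{d}{dt}$ with the $n\to\infty$ limit defining $A_{-t\varphi}$ and the Gibbs family. Concretely one must check that the $n$-step approximations to $\log A_{-t\varphi}$ and to $\int\varphi\,d\mu_{t,y}$ converge exponentially fast \emph{uniformly in $t$}, which requires tracking the dependence of the constants in \cite{4} on the potential and invoking the assumed uniform bound on $\overline t$; this is where the restriction to basic potentials (Remark \ref{rem2}) is genuinely used, since for a non-basic potential $V_y{\bf 1}$ is not constant and the clean finite-sum formula \eqref{basic} fails. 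Once these uniformities are in place, \eqref{RR1} and \eqref{RR2} follow by differentiating the uniformly convergent families, and the membership $\frac{d}{dt}\log A_{-t\varphi}\in\mathcal{H}^{D_\theta C,\eta(\theta)}(Y)$ is read off from the corresponding bound on the approximants.
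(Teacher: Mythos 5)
Your starting point coincides with the paper's: for a basic potential, $A_{-t\varphi}(y)=(V_{t,y}\mathbf{1})(x)=\sum_{x'}e^{-t\varphi(x')}$ is a finite exponential sum, hence smooth in $t$, with $\frac{d}{dt}\log A_{-t\varphi}(y)=-(V_{t,y}\varphi)(x)/(V_{t,y}\mathbf{1})(x)$. But the decisive step --- identifying this normalized exponential average with $\int\varphi\,d\mu_{t,y}$ --- is where your argument goes wrong. Your claim that $\mu_{t,y}$ assigns the one-step cylinder the weight $e^{-t\varphi(x')}/(V_y^{(t)}\mathbf{1})$ is false in general: $e^{-t\varphi(x')}$ depends on the full tail of $x'$ (i.e.\ on the arbitrary choice of $x\in\pi^{-1}(S(y))$), so it is not a well-defined cylinder weight. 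What is actually true, via (\ref{A1}), is $\mu_{t,y}([j_1'])=A_{-t\varphi}(y)^{-1}\int e^{-t\varphi\circ\sigma_{j_1'}}\,d\mu_{t,S(y)}$, an average over the tail rather than a single exponential. ``Iterating (\ref{basic}) along the tower'' is not a substitute for this; as stated it does not produce the identity.

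The missing ingredient is exactly what the paper supplies. Because $A_{-t\varphi}(y)=(V_{t,y}\mathbf{1})(x)$ is independent of $x$, differentiating in $t$ shows $(V_{t,y}\varphi)(x)$ is also independent of $x$, i.e.\ $V_{t,y}\varphi$ is a \emph{constant} function on $\pi^{-1}(S(y))$. One then applies the operator identity (\ref{A1}) to $\varphi$: $A_{-t\varphi}(y)\,P_{t,y}\varphi = U_y P_{t,S(y)} V_{t,y}\varphi$, and since $P_{t,S(y)}$ fixes constants, the right-hand side collapses to the constant $V_{t,y}\varphi$, giving $A_{-t\varphi}(y)\int\varphi\,d\mu_{t,y}=V_{t,y}\varphi$. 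Combined with $\frac{d}{dt}A_{-t\varphi}(y)=-V_{t,y}\varphi$ this yields (\ref{RR1}); the same argument applied to $\varphi^2$ (whose $V_{t,y}$-image is again constant, being $\frac{d^2}{dt^2}A_{-t\varphi}(y)$) gives (\ref{RR2}). Your ``variance of a finite exponential sum'' observation is arithmetically right, but the identification of that variance with the variance under $\mu_{t,y}$ needs this same (\ref{A1})-plus-constancy step. The Hölder regularity of $\frac{d}{dt}\log A_{-t\varphi}$ is handled in the paper by a direct citation of Theorem~2.10 of \cite{4}, rather than by re-deriving exponential-convergence estimates for the approximants; your sketch of the latter would require genuinely more work to make precise.
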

\begin{proof}
The differentiability of $t\mapsto A_{-t\varphi}$ is an immediate consequence of (\ref{basic}), and
\begin{equation}\label{difA}
       \frac{d}{dt} A_{-t\varphi}(y)=-(V_{t,y} \varphi)(x),\quad y\in Y,\,x\in\pi^{-1}(S(y))
\end{equation}
(where $V_{t,y}$ is $V_{y}$ with the potential $\phi=-t\varphi$). In particulary, $(V_{t,y} \varphi)(x)$ does not depend on 
$x\in\pi^{-1}(S(y))$. Then applying (\ref{A1}) to $\varphi$ we get
\[
    A_{-t\varphi}(y) P_{t,y}\varphi= V_{t,y} \varphi,
\]
which together with (\ref{difA}) gives (\ref{RR1}). The H\"older-continuity of $\frac{d}{dt}\log A_{-t\varphi}$ follows from Theorem 2.10 of \cite{4}.

In the same way, by (\ref{difA}) we see that $t\mapsto\frac{d}{dt}\log A_{-t\varphi}$ is differentiable and
\[
   \frac{d^2}{dt^2} A_{-t\varphi}(y)=(V_{t,y} \varphi^2)(x),\quad y\in Y,\,x\in\pi^{-1}(S(y)),
\]
and, by (\ref{A1}) applied to $\varphi^2$, 
\[
       A_{-t\varphi}(y) P_{t,y}\varphi^2= V_{t,y} \varphi^2,
\]
so that
\[
\frac{d^2}{dt^2} A_{-t\varphi}(y)=A_{-t\varphi}(y) P_{t,y}\varphi^2.
\]
Since
\[
\frac{d^2}{dt^2} \log A_{-t\varphi}(y)=\frac{1}{A_{-t\varphi}(y)}\frac{d^2}{dt^2} A_{-t\varphi}(y)-\left(\frac{d}{dt}\log A_{-t\varphi}(y)\right)^2,
\]
we get (\ref{RR2}).
\end{proof}

Recall the definition of $\Phi_t$ from (\ref{potential}). 
\begin{proposition}\label{lem2}
Assume $-t\varphi$ is a basic potential for $t\in (\underline{t},\overline{t})$. Then $t\mapsto \Phi_t$ is differentiable and
\begin{equation}\label{RR3}
  \frac{dP(\Phi_t) }{dt} = \int \psi\, d \nu_{\Phi_t} - \beta(t) \int \varphi\, d \mu_{\Phi_t}.
\end{equation}
Moreover, 
\begin{multline}\label{RR4}
     \frac{d^2P(\Phi_t) }{dt^2}=-\beta'(t) \int \varphi\, d \mu_{\Phi_t}\,+\,\beta(t)\int \frac{d^2}{dt^2} \log A_{-t\varphi}\, d \nu_{\Phi_t}\\ +\,Q_{\Phi_t}(\psi, \dot{\Phi_t})\,+\, \beta(t) \,Q_{\Phi_t}\left(\frac{d}{dt} \log A_{-t\varphi}, \dot{\Phi_t}\right).
\end{multline}
\end{proposition}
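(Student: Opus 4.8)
The plan is to reduce the statement to the $C^1$ regularity of the scalar function $t\mapsto\beta(t)$, and then to read off both formulas by repeatedly applying the Ruelle formulas of Proposition~\ref{prop1} together with the relativized formulas (\ref{RR1}) and (\ref{RR2}) of Proposition~\ref{prop3}. Write $\chi_t:=\log A_{-t\varphi}$; this lies in a fixed space $\mathcal{H}^{C',\theta'}(Y)$ (independently of $t$, as in \cite{4,5}), and by Proposition~\ref{prop3} the maps $t\mapsto\chi_t$ and $t\mapsto\dot{\chi}_t$ are differentiable with $\dot{\chi}_t$ and $\ddot{\chi}_t$ again H\"older, uniformly in $t$ and varying continuously with $t$. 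Granting $\beta\in C^1$, the family $\dot\Phi_t=\psi+\beta'(t)\chi_t+\beta(t)\dot{\chi}_t$ is then a continuous one-parameter family of H\"older functions with H\"older data bounded uniformly on compact subintervals of $(\underline t,\overline t)$; hence $t\mapsto\Phi_t$ is differentiable in the sense needed in Proposition~\ref{prop1} and every derivative in the statement is meaningful.

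To get $\beta\in C^1$ I would apply the implicit function theorem. Put $\Phi_t^\beta:=(t-D)\psi+\beta\chi_t$ and $H(t,\beta):=\int_Y\chi_t\,d\nu_{\Phi_t^\beta}$, so that $H(t,\beta(t))=0$ is exactly the defining relation of $\beta(t)$. By the second Ruelle formula of Proposition~\ref{prop1},
\[
   \frac{\partial H}{\partial\beta}(t,\beta)=Q_{\Phi_t^\beta}(\chi_t,\chi_t),
\]
a non-negative variance-type quantity that vanishes only if $\chi_t$ is constant $\nu$-a.e.; since the relevant Gibbs state has full support and $\chi_t$ is H\"older, this would force $\chi_t$ constant on $Y$, and then monotonicity of $t\mapsto\int\chi_t\,d\nu$ (which is strictly decreasing because $\varphi>0$) would push $t$ to an endpoint of $(\underline t,\overline t)$ — so the partial derivative is strictly positive there. (This strict positivity is also the monotonicity in $\beta$ underlying the very definition of $\beta(t)$ in \cite{8}.) Likewise $\partial_t H$ exists and equals $\int\dot{\chi}_t\,d\nu_{\Phi_t^\beta}+Q_{\Phi_t^\beta}(\psi+\beta\dot{\chi}_t,\chi_t)$; the continuity assertions closing Proposition~\ref{prop1}, together with the $C^1$ dependence of $\chi_t$ on $t$, give $H\in C^1$ jointly, so the implicit function theorem yields $\beta\in C^1$ with $\beta'=-\partial_tH/\partial_\beta H$.

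The two formulas now follow. For (\ref{RR3}), the first Ruelle formula of Proposition~\ref{prop1} gives
\[
   \frac{dP(\Phi_t)}{dt}=\int\dot\Phi_t\,d\nu_{\Phi_t}
   =\int\psi\,d\nu_{\Phi_t}+\beta'(t)\int\chi_t\,d\nu_{\Phi_t}+\beta(t)\int\dot{\chi}_t\,d\nu_{\Phi_t};
\]
the middle integral is $0$ by the relation $\int\chi_t\,d\nu_{\Phi_t}=0$, and, since $-t\varphi$ is a basic potential, Proposition~\ref{prop2}(b) gives $\mu_{\Phi_t}=\mu_{t,y}\times\nu_{\Phi_t}$, so by (\ref{RR1}),
\[
   \int\dot{\chi}_t\,d\nu_{\Phi_t}=\int_Y\Bigl(-\int\varphi\,d\mu_{t,y}\Bigr)d\nu_{\Phi_t}(y)=-\int\varphi\,d\mu_{\Phi_t},
\]
which is (\ref{RR3}). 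For (\ref{RR4}) I differentiate (\ref{RR3}): Proposition~\ref{prop1} gives $\frac{d}{dt}\int\psi\,d\nu_{\Phi_t}=Q_{\Phi_t}(\psi,\dot\Phi_t)$, while for $\beta(t)\int\varphi\,d\mu_{\Phi_t}$ one uses the product rule and rewrites $\int\varphi\,d\mu_{\Phi_t}=-\int\dot{\chi}_t\,d\nu_{\Phi_t}$; differentiating this integral, in which both the integrand and the reference measure depend on $t$, by splitting the difference quotient into a moving-integrand part (yielding $\int\ddot{\chi}_t\,d\nu_{\Phi_t}$) and a moving-measure part (yielding $Q_{\Phi_t}(\dot{\chi}_t,\dot\Phi_t)$ via Proposition~\ref{prop1}), and substituting $\ddot{\chi}_t=\frac{d^2}{dt^2}\log A_{-t\varphi}$ from (\ref{RR2}), collects exactly the four terms of (\ref{RR4}).

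I expect the main obstacle to be the second paragraph — guaranteeing that $Q_{\Phi_t}(\chi_t,\chi_t)$ is bounded away from $0$, equivalently that $\log A_{-t\varphi}$ is genuinely non-constant on $Y$, so that the implicit function theorem applies and $\beta'$ is continuous on $(\underline t,\overline t)$. Everything else is bookkeeping with Ruelle's formulas and with the product rule for an integral whose integrand and reference measure both depend on the parameter.
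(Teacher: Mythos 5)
Your argument follows the paper's own proof almost step for step: the implicit function theorem applied to $H(t,\beta)=\int\log A_{-t\varphi}\,d\nu_{\Phi_t^\beta}$ (the paper's $F(t,\beta)$, with the same partial-derivative formulas (\ref{imp1})--(\ref{imp3})) to get $\beta\in C^1$; then Proposition~\ref{prop1} to compute $\frac{d}{dt}P(\Phi_t)$; the relation $\int\log A_{-t\varphi}\,d\nu_{\Phi_t}=0$ to kill the $\beta'(t)$ term; and the basic-potential identities (\ref{RR1}) together with Proposition~\ref{prop2}(b) to convert $\int\frac{d}{dt}\log A_{-t\varphi}\,d\nu_{\Phi_t}$ into $-\int\varphi\,d\mu_{\Phi_t}$. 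A second application of the product rule (splitting into moving-integrand and moving-measure parts) and Proposition~\ref{prop1} then yields (\ref{RR4}), exactly as the paper obtains it by differentiating (\ref{above}).

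The one inaccuracy is in your attempted justification that $\partial_\beta H>0$: you assert that $Q_{\Phi_t^\beta}(\chi_t,\chi_t)$ vanishes only if $\chi_t$ is constant $\nu$-a.e., but the asymptotic variance $Q_\phi(h,h)$ vanishes if and only if $h$ is \emph{cohomologous} to a constant --- this is exactly the condition the paper invokes from \cite{8} in the proof of Theorem~\ref{t5} (``$\log A_{-t\varphi}$ is not cohomologous to a constant''). Your downstream reasoning in fact survives under the correct hypothesis, because if $\chi_t=c+u\circ S-u$ then still $\int\chi_t\,d\nu=c$ for every $S$-invariant $\nu$, and your monotonicity-in-$t$ argument again pushes $t$ outside $(\underline t,\overline t)$; so the slip is repairable. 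In any case the paper sidesteps this by citing \cite{8} directly for the positivity of $\partial F/\partial\beta$, which your parenthetical also acknowledges, so the remaining argument is sound and coincides with the paper's.
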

\begin{proof}
Let $\psi, \varphi \in\mathcal{H}^{C,\theta}(Z)$, where $Z=Y$ or $X$. Fix $\varepsilon>0$ arbitrarly small.
It follows from Theorem 2.10 of \cite{4} (see also Proposition 2 of \cite{8}) that $A_{t\varphi} \in\mathcal{H}^{D_1, \eta}(Y)$, for some constants $D_1=D_1(C,\theta)>0$ and $\eta=\eta(\theta)>0$, for every $t\in [\underline{t},\overline{t}]$. Of course, we may assume $\eta\le\theta$. It is also proved in \cite{8} that $\beta(t)$ is continuous for $t\in [\underline{t}+\varepsilon,\overline{t}-\varepsilon]$. So, by (\ref{potential}), we have $\Phi_{t} \in\mathcal{H}^{D_2,\eta}(Y)$, for some constant $D_2=D_2(C,\theta,\varepsilon)$, for every $t\in [\underline{t}+\varepsilon,\overline{t}-\varepsilon]$. 

Let us see that $\beta(t)$ is $C^1$ for $t\in(\underline{t},\overline{t})$. 
Let
\[
   F(t,\beta)=\int \log A_{-t\varphi}\,d\nu_{(t,\beta)},
\]
where $\nu_{(t,\beta)}$ is the Gibbs sate for the potential
$\phi_{(t,\beta)}=(t-D)\psi+\beta \log A_{-t\varphi}$. By Propositions \ref{prop1} and \ref{prop3},
\begin{equation}\label{imp1}
    \frac{\partial F}{\partial \beta}(t,\beta)=Q_{\phi_{(t,\beta)}}
    (\log A_{-t\varphi}, \log A_{-t\varphi})
\end{equation}
and
\begin{equation}\label{imp2}
    \frac{\partial F}{\partial t}(t,\beta)=\int\frac{d}{dt} \log A_{-t\varphi}\,d\nu_{(t,\beta)}+
    Q_{\phi_{(t,\beta)}}\left(\log A_{-t\varphi}, \psi +\beta \frac{d}{dt} \log A_{-t\varphi}\right),
\end{equation}
and so, by \cite{13}, $F$ is $C^1$. By \cite{8}, $\frac{\partial F}{\partial \beta}(t,\beta)>0$ and $\beta(t)$ is well defined as the unique solution of $F(t,\beta(t))=0$. Then, it follows by the implicit function theorem that $\beta(t)$ is $C^1$ and 
\begin{equation}\label{imp3}
\beta'(t)=-\frac{\partial F}{\partial t}(t,\beta(t))\, / \,\frac{\partial F}{\partial \beta}(t,\beta(t)).
\end{equation}

Then $t\mapsto \Phi_{t}$ is differentiable, 
\begin{equation}\label{phidot}
  \dot{\Phi_t}=\psi+\beta'(t)  \log A_{-t\varphi} + \beta(t) \frac{d}{dt} \log A_{-t\varphi}\in\mathcal{H}^{D_2,\eta}(Y)
\end{equation}
for every $t\in [\underline{t}+\varepsilon,\overline{t}-\varepsilon]$ (after, eventually, increasing $D_2$ and decreasing $\eta$), and applying Proposition \ref{prop1} we get
\begin{equation}\label{above}
  \frac{dP(\Phi_t) }{dt} = \int \psi\, d \nu_{\Phi_t} + \beta(t) \int \frac{d}{dt} \log A_{-t\varphi}\, d \nu_{\Phi_t}.
\end{equation}
This together with (\ref{RR1}) and Proposition \ref{prop2} gives (\ref{RR3}). In the same way, (\ref{RR4}) follows by applying 
Proposition \ref{prop1} to (\ref{above}).
\end{proof}

Let
\[
    \rho(t)=\frac{\int \psi\, d \nu_{\Phi_t}}{\int \varphi\, d \mu_{\Phi_t}}.
\]
\begin{proposition}\label{prop}
 Assume (\ref{hip}) and $-t\varphi$ is a basic potential for $t\in (\underline{t},\overline{t})$. If $D(\mu)=D$ then $\mu=\mu_{\Phi_t}$ and $\beta(t)=\rho(t)$.
\end{proposition}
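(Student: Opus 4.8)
The plan is to derive both conclusions from Theorem \ref{tl2} and the first-order formula (\ref{RR3}) of Proposition \ref{lem2}. Assume $D(\mu)=D$. By Theorem \ref{tl2} there is a parameter $t$ --- necessarily in $(\underline{t},\overline{t})$, since $\Phi_t$ is only defined there --- for which $\mu=\mu_{\Phi_t}$ and $P(\Phi_t)=0$, the latter being the \emph{maximum} value of the one-parameter family $s\mapsto P(\Phi_s)$. The equality $\mu=\mu_{\Phi_t}$ is already the first assertion of the proposition, so it remains only to prove $\beta(t)=\rho(t)$.

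For this I would first record that $s\mapsto P(\Phi_s)$ is differentiable on $(\underline{t},\overline{t})$: by Proposition \ref{lem2} the map $s\mapsto\Phi_s$ is differentiable with $\dot\Phi_s$ lying in a fixed H\"older space $\mathcal{H}^{D_2,\eta}(Y)$ on each compact subinterval (see (\ref{phidot})), so Proposition \ref{prop1} applies and gives differentiability of the pressure along this family. Since the value $0$ is attained at the interior point $s=t$ as the maximum of this differentiable function, Fermat's criterion yields $\frac{d}{ds}P(\Phi_s)\big|_{s=t}=0$.

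Finally I would substitute this into (\ref{RR3}). That formula reads $\frac{dP(\Phi_t)}{dt}=\int\psi\,d\nu_{\Phi_t}-\beta(t)\int\varphi\,d\mu_{\Phi_t}$, so vanishing of the derivative gives $\int\psi\,d\nu_{\Phi_t}=\beta(t)\int\varphi\,d\mu_{\Phi_t}$. As $\varphi>0$ we have $\int\varphi\,d\mu_{\Phi_t}>0$, and dividing yields $\beta(t)=\big(\int\psi\,d\nu_{\Phi_t}\big)\big/\big(\int\varphi\,d\mu_{\Phi_t}\big)=\rho(t)$, as claimed.

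The argument is essentially immediate once Theorem \ref{tl2} and Proposition \ref{lem2} are in hand; the only point needing a little attention is the assertion that $P(\Phi_t)=0$ is an \emph{interior} extremum of $s\mapsto P(\Phi_s)$, so that its derivative vanishes there. This is exactly where hypothesis (\ref{hip}) enters (through Theorem \ref{tl2}): it guarantees that a measure of maximal dimension is of the form $\mu_{\Phi_t}$ with $t$ in the \emph{open} interval $(\underline{t},\overline{t})$, rather than at an endpoint $\underline{t}$ or $\overline{t}$ where the first-order computation would not be available.
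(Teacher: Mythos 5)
Your proof is correct and follows the same route as the paper: invoke Theorem \ref{tl2} to get $\mu=\mu_{\Phi_t}$ with $P(\Phi_t)=0$ the maximum value, deduce $\frac{dP(\Phi_t)}{dt}=0$ from this, and substitute into formula (\ref{RR3}) of Proposition \ref{lem2}. The paper compresses the deduction of $\frac{dP(\Phi_t)}{dt}=0$ into a single clause, whereas you spell out the Fermat-criterion step (interior maximum of a differentiable family), but the substance is identical.
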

\begin{proof}
From Theorem \ref{tl2} we have that $\mu=\mu_{\Phi_t}$ and $\frac{dP(\Phi_t) }{dt}=0$. Then it follows from Proposition \ref{lem2} that $\beta(t)=\rho(t)$.
\end{proof}

\subsection{Criterium for uniqueness of measure of maximal dimension}
Now we give sufficient conditions for having $\frac{d^2P(\Phi_t) }{dt^2}<0$ which, by Theorem \ref{tl2}, implies the existence of a \emph{unique} measure of maximal dimension (the existence follows from Theorem A in \cite{8}), as already noticed in Remark 3 in \cite{8}.

We will need a \emph{uniform version} of Hypothesis (\ref{hip}). Given $\varepsilon>0$ let
\begin{equation}\label{hipu}
  \text{if the supremum in (\ref{pvorig2}) is attained at an ergodic measure $\nu$ then $t(\nu)\in(\underline{t}+\varepsilon,\overline{t}-\varepsilon)$}. \tag{$\mathrm{H}_\varepsilon$} 
\end{equation}

\begin{theorem}\label{t5}
Let $\psi\in \mathcal{H}^{C,\theta}(Y)$ and $\varphi\in \mathcal{H}^{C,\theta}(X)$ be positive. Assume $-t\varphi$ is a basic potential for $t\in(\underline{t},\overline{t})$.
Assume  (\ref{hipu}) for some $\varepsilon>0$. Take any $\gamma>0$ such that 
\begin{enumerate}[(i)]
       \item $\gamma^{-1}<\varphi<\gamma$;
       \item $|\beta(t)|<\gamma,\quad t\in(\underline{t}+\varepsilon,\overline{t}-\varepsilon)$;
       \item $|\beta'(t)|<\gamma,\quad t\in(\underline{t}+\varepsilon,\overline{t}-\varepsilon)$.
\end{enumerate}
Then there exists a constant $\delta=\delta(C,\theta, \varepsilon, \gamma)>0$ such that, if $||\psi||_\theta<\delta$ and
$||\varphi||_\theta<\delta$, then there is a unique measure of maximal dimension, say 
$\mu_{\psi,\varphi}$, which is ergodic (a Gibbs state for a relativized variational principle). 

Moreover, hypotheses (\ref{hipu}) and (i)-(iii) are \emph{robust} in the following sense:
if $\tilde\psi\in \mathcal{H}^{C,\theta}(Y)$, $\tilde\varphi\in \mathcal{H}^{C,\theta}(X)$ are positive, $-t\tilde\varphi$ is a basic potential and $\tilde\psi,\,\tilde\varphi$ are $\||.|\|_\theta$-close to, respectively, $\psi,\,\varphi$ satisfying these hypotheses, then $\tilde\psi,\,\tilde\varphi$ also satisfy these hypotheses. Then we have that $(\psi,\varphi)\mapsto \mu_{\psi,\varphi}$ is continuous.
\end{theorem}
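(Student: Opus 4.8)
The plan is to show that, for $\delta$ small enough, the pressure function $t\mapsto P(\Phi_t)$ is strictly concave on $(\underline t+\varepsilon,\overline t-\varepsilon)$; this realizes the idea indicated in Remark 3 of \cite{8}. Granting strict concavity, uniqueness follows at once: a measure of maximal dimension exists by Theorem A of \cite{8}, and by Theorem \ref{tl2} any such measure has the form $\mu_{\Phi_t}$ with $P(\Phi_t)=0$ equal to the maximal value of $P(\Phi_\cdot)$; under the uniform hypothesis (\ref{hipu}) the corresponding parameter $t$ moreover lies in $(\underline t+\varepsilon,\overline t-\varepsilon)$ (this is what the proof of Theorem \ref{tl2} yields with (\ref{hipu}) replacing (\ref{hip})). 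Since a strictly concave function attains its maximum at a single point, this $t$ is unique, say $t^*$, and so is the measure $\mu_{\psi,\varphi}:=\mu_{\Phi_{t^*}}$; it is ergodic, being the relative equilibrium state for $-t^*\varphi$ over the ergodic Gibbs measure $\nu_{\Phi_{t^*}}$.

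To obtain the strict concavity I would estimate $\frac{d^2P(\Phi_t)}{dt^2}$ through formula (\ref{RR4}). The key point is that $||\psi||_\theta,||\varphi||_\theta<\delta$ forces $\psi$ and $\varphi$ to be uniformly $O(\delta)$-close to constants (the subshifts having diameter $\le1$), so that: the conditional variance $\int\varphi^2\,d\mu_{t,y}-(\int\varphi\,d\mu_{t,y})^2$ in (\ref{RR2}) is $O(\delta^2)$; the H\"older seminorm $||\frac{d}{dt}\log A_{-t\varphi}||_\theta$ is $O(\delta)$, by (\ref{RR1}) and the H\"older estimates of \cite{4} underlying Proposition \ref{prop3} (the relevant constants depending only on $C,\theta,\gamma$); while $||\log A_{-t\varphi}||_\theta$ and, using (\ref{phidot}) with (ii)--(iii), $||\dot{\Phi_t}||_\theta$ stay bounded by constants depending only on $C,\theta,\gamma$. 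Feeding these into (\ref{RR4}) and using $|Q_{\Phi_t}(h_1,h_2)|\le B||h_1||_\theta||h_2||_\theta$ from Proposition \ref{prop1}, each of the terms $\beta(t)\int\frac{d^2}{dt^2}\log A_{-t\varphi}\,d\nu_{\Phi_t}$, $Q_{\Phi_t}(\psi,\dot{\Phi_t})$ and $\beta(t)Q_{\Phi_t}(\frac{d}{dt}\log A_{-t\varphi},\dot{\Phi_t})$ is $O(\delta)$, whence
\[
   \frac{d^2P(\Phi_t)}{dt^2}=-\,\beta'(t)\int\varphi\,d\mu_{\Phi_t}\;+\;O(\delta)
\]
uniformly for $t\in(\underline t+\varepsilon,\overline t-\varepsilon)$.

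It then remains to bound $\beta'(t)$ below by a positive constant, and this — together with the uniform-in-$t$, uniform-in-$\delta$ bookkeeping of the previous step — is where I expect the main work to lie. Here I would use the implicit-function formula (\ref{imp3}) with (\ref{imp1}), (\ref{imp2}): the denominator $\frac{\partial F}{\partial\beta}=Q_{\phi_{(t,\beta)}}(\log A_{-t\varphi},\log A_{-t\varphi})\le B||\log A_{-t\varphi}||_\theta^2$ is bounded above by a constant depending only on $C,\theta$, while the numerator satisfies $-\frac{\partial F}{\partial t}\ge\int\varphi\,d(\mu_{t,y}\times\nu_{(t,\beta)})-O(\delta)\ge\gamma^{-1}-O(\delta)$, using (\ref{RR1}), Proposition \ref{prop2}(b), hypotheses (i)--(ii) and the seminorm bounds above. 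Hence $\beta'(t)\ge c>0$ with $c=c(C,\theta,\gamma)$ once $\delta$ is small, so $-\beta'(t)\int\varphi\,d\mu_{\Phi_t}\le-c\,\gamma^{-1}$, and choosing $\delta=\delta(C,\theta,\varepsilon,\gamma)$ small enough makes the right-hand side of the displayed identity negative throughout $(\underline t+\varepsilon,\overline t-\varepsilon)$. The basic-potential hypothesis is essential in these two steps, since it is what supplies the identities of Proposition \ref{prop3} and the product form $\mu_{\Phi_t}=\mu_{t,y}\times\nu_{\Phi_t}$ of Proposition \ref{prop2}(b) used above.

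Finally, for the robustness and continuity statements. The conditions $\gamma^{-1}<\varphi<\gamma$, $|\beta(t)|<\gamma$ and $|\beta'(t)|<\gamma$ on compact subintervals of $(\underline t+\varepsilon,\overline t-\varepsilon)$ are open in the $\||.|\|_\theta$ topology, because (by continuity of the pressure and of Gibbs states, by the formulas (\ref{imp1})--(\ref{imp3}), and by the continuity of $\phi\mapsto Q_\phi(h_1,h_2)$ from Proposition \ref{prop1}) the maps $(\psi,\varphi)\mapsto\beta(t)$ and $(\psi,\varphi)\mapsto\beta'(t)$ are continuous uniformly in $t$ there, and $\underline t,\overline t$ depend continuously on $\varphi$. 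Hypothesis (\ref{hipu}) persists because, once $P(\Phi_\cdot)$ is strictly concave, its critical point $t^*$ corresponds to the unique maximizer $\nu=\nu_{\Phi_{t^*}}$ of (\ref{pvorig2}) (with $t(\nu)=t^*$), and by the implicit function theorem (using $\frac{d^2P}{dt^2}<0$) the number $t^*$ depends continuously on $(\psi,\varphi)$, so $t^*\in(\underline t+\varepsilon,\overline t-\varepsilon)$ is preserved under small perturbations (possibly after decreasing $\varepsilon$). The same continuity of $t^*$, together with the continuous dependence of $\beta(t)$ and $\Phi_t$ on $(t,\psi,\varphi)$ and of the Gibbs state $\nu_{\Phi_t}$ and the relative equilibrium state $\mu_{\Phi_t}=\mu_{t,y}\times\nu_{\Phi_t}$ on their data, yields the continuity of $(\psi,\varphi)\mapsto\mu_{\psi,\varphi}$.
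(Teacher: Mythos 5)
Your core uniqueness argument is essentially the paper's: both establish strict concavity of $t\mapsto P(\Phi_t)$ on $(\underline t+\varepsilon,\overline t-\varepsilon)$ by estimating the four terms in \eqref{RR4}, using that $\log A_{-t\varphi}$ and $\frac{d}{dt}\log A_{-t\varphi}$ have $\eta$-seminorm $O(\|\varphi\|_\theta)$ (via \cite{4}), that the variance in \eqref{RR2} is $O(\|\varphi\|_\theta)$, and that $\|\dot\Phi_t\|_\eta$ is bounded via \eqref{phidot} and (ii)--(iii). Your treatment of Term 1 is coarser than the paper's: you bound $Q_{\phi_{(t,\beta)}}(\log A_{-t\varphi},\log A_{-t\varphi})$ only by a constant and deduce $\beta'(t)\ge c(C,\theta,\gamma)>0$, whereas the paper feeds $\|\log A_{-t\varphi}\|_\eta\le D_1\|\varphi\|_\theta$ into the denominator of \eqref{estbeta} and gets the sharper bound $-\beta'(t)\int\varphi\,d\mu_{\Phi_t}\le-\bigl(\gamma^{-2}-D_2\|\varphi\|_\theta\bigr)/\bigl(D_2\|\varphi\|_\theta^2\bigr)$, which tends to $-\infty$ as $\|\varphi\|_\theta\to0$. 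Both versions are adequate for $\frac{d^2P(\Phi_t)}{dt^2}<0$. The continuity-of-$\mu_{\psi,\varphi}$ discussion also matches the paper's in substance.

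Where you diverge, and where there is a genuine gap, is the robustness of hypothesis \eqref{hipu}. You argue that once $P(\Phi_\cdot)$ is strictly concave, its critical point $t^*$ corresponds to the unique maximizer $\nu_{\Phi_{t^*}}$ of \eqref{pvorig2}, and that $t^*$ moves continuously. But the identification of the maximizer of \eqref{pvorig2} for the \emph{perturbed} data $(\tilde\psi,\tilde\varphi)$ with $\tilde\nu_{\tilde\Phi_{\tilde t^*}}$ rests on Theorem~\ref{tl2}, which itself assumes hypothesis \eqref{hip} for $(\tilde\psi,\tilde\varphi)$ — essentially what you are trying to establish. A priori the supremum in \eqref{pvorig2} for the perturbed data could be attained at an ergodic $\nu$ with $t_{\tilde\varphi}(\nu)$ near $\underline{\tilde t}$ or $\overline{\tilde t}$, outside the range where the $\Phi_t$-machinery applies, and strict concavity in the interior does not by itself exclude this. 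The paper sidesteps the circularity by working directly on the variational problem \eqref{pvorig2}: using the basic-potential identity \eqref{basic} it gets $|\log A_{-t\tilde\varphi}-\log A_{-t\varphi}|\le K\|\tilde\varphi-\varphi\|$, deduces that $\varphi\mapsto t_\varphi(\nu)$ is continuous \emph{uniformly in $\nu$}, and likewise $\psi\mapsto\int\psi\,d\nu$; hence the functional being maximized in \eqref{pvorig2} is close, uniformly in $\nu$, to that for the unperturbed data, and the location of maximizers is controlled without presupposing the Theorem~\ref{tl2} characterization. You would need to replace your argument here by this (or a comparable) direct perturbation of \eqref{pvorig2}.
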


\begin{proof}
\emph{Unicity of $\mu_{\psi,\varphi}$.}

It follows from the Proof of Theorem A in \cite{8} (see also Remark 3 in \cite{8}) that maximizing measures are of the form
$\mu_{\Phi_t}$ where $P(\Phi_t)=\frac{dP(\Phi_t) }{dt}=0$ for some $t\in(\underline{t}+\varepsilon,\overline{t}-\varepsilon)$.
Therefore, we only need to prove that $\frac{d^2 P(\Phi_t) }{dt^2}<0$ for $t\in(\underline{t}+\varepsilon,\overline{t}-\varepsilon)$, and this will be done estimating the 4 terms in (\ref{RR4}).

\emph{Term 1.} By (\ref{imp1}), (\ref{imp2}), (\ref{imp3}), (\ref{RR1}) and Proposition \ref{prop2} (b), we have
\begin{equation}\label{estbeta}
   \beta'(t)=\frac{ \int\varphi\,d\mu_{\Phi_t} - Q_{\Phi_t}\left(\log A_{-t\varphi}, \psi +\beta \frac{d}{dt} \log A_{-t\varphi}\right) }
    { Q_{\Phi_t} (\log A_{-t\varphi}, \log A_{-t\varphi}) }.
\end{equation}
Of course, $\int\varphi\,d\mu_{\Phi_t}>\gamma^{-1}$. It follows from \cite{8} that $\log A_{-t\varphi}$ is not 
\emph{cohomologous to a constant} and so $Q_{\Phi_t} (\log A_{-t\varphi}, \log A_{-t\varphi}) >0$. 
It follows from Theorem 2.10 of \cite{4} (see also Proposition 2 of \cite{8}) that $\log A_{t\varphi} \in\mathcal{H}^{D_1 
||\varphi||_\theta, \eta}(Y)$, for some constants $D_1=D_1(\theta,\varepsilon,\gamma)>0$ and $\eta=\eta(\theta)>0$
(we put the dependence on $\varepsilon$ because $\gamma$ depends on $\varepsilon$). Of course, we may assume $\eta\le\theta$. In the same way, by (\ref{potential}), we see that $\Phi_t\in\mathcal{H}^{D_2, \eta}(Y)$,
for some constant $D_2=D_2(C,\theta,\varepsilon,\gamma)>0$.
So we may apply Proposition \ref{prop1} to obtain 
\[
Q_{\Phi_t} (\log A_{-t\varphi}, \log A_{-t\varphi})\le D_2 ||\varphi||_\theta^2
\] 
(after, eventually, increasing $D_2$; we will do this a finite number of times). In the same way, see Proposition \ref{prop3}, we have
\begin{equation}\label{dta}
   \frac{d}{dt} \log A_{-t\varphi}\in\mathcal{H}^{D_1 ||\varphi||_\theta, \eta}(Y),
\end{equation}
and, applying Proposition \ref{prop1} again, we get
\[
        \left|Q_{\Phi_t}\left(\log A_{-t\varphi}, \psi +\beta \frac{d}{dt} \log A_{-t\varphi}\right)\right|\le
        D_2 ||\varphi||_\theta.
\]
Putting all these together in (\ref{estbeta}), we get
\begin{equation}\label{term1}
  -\beta'(t) \int\varphi\,d\mu_{\Phi_t} \le - \frac{\gamma^{-2}-D_2 ||\varphi||_\theta}{D_2 ||\varphi||_\theta^2},
\end{equation}
if $||\varphi||_\theta<\gamma^{-2} D_2^{-1}$.

\emph{Term 2.} 
Remember from (\ref{RR2}),
\[
 \frac{d^2}{dt^2} \log A_{-t\varphi}=\int \varphi^2\,d\mu_{t,y}  -\left( \int \varphi\,d\mu_{t,y}\right)^2\ge0,
\]
by Cauchy-Shwarz inequality. Clearly,
\begin{align*}
   \int \varphi^2\,d\mu_{t,y}  -\left( \int \varphi\,d\mu_{t,y}\right)^2\le 
   \left(\sup \varphi\right)^2 -  \left(\inf \varphi\right)^2\le 2\gamma \left(\sup \varphi -  \inf \varphi\right),
\end{align*}
and $\sup \varphi -  \inf \varphi\le \max\{1,\mathrm{diam}(X)\} ||\varphi||_\theta$.
So,
\[
\left| \beta(t)\int \frac{d^2}{dt^2} \log A_{-t\varphi}\, d \nu_{\Phi_t} \right|\le C_0\gamma^2 ||\varphi||_\theta,
\]
for some constant $C_0$.

\emph{Term 3.} It follows from (\ref{phidot}) and reasoning as in \emph{Term 1} that 
$\dot{\Phi_t}\in\mathcal{H}^{D_2, \eta}(Y)$. So, applying Proposition \ref{prop1} we get
\[
\left| Q_{\Phi_t}(\psi, \dot{\Phi_t}) \right|\le D_2 ||\psi ||_\theta.
\]

\emph{Term 4.} It follows from (\ref{dta}) and Proposition \ref{prop1} that
\[
    \left| \beta(t)\, Q_{\Phi_t}\left(\frac{d}{dt} \log A_{-t\varphi}, \dot{\Phi_t} \right)\right|\le \gamma D_2 ||\varphi ||_\theta.
\]

Finally, putting all 4 terms together gives
\[
  \frac{d^2P(\Phi_t) }{dt^2}\le - \frac{\gamma^{-2}-D_2 ||\varphi||_\theta}{D_2 ||\varphi||_\theta^2}+ 
  C_0\gamma^2 ||\varphi||_\theta+ D_2 ||\psi ||_\theta+\gamma D_2 ||\varphi ||_\theta<0,
\]
if we do $||\varphi ||_\theta<\delta$, $||\psi ||_\theta<\delta$ and $\delta=\delta(C,\theta, \varepsilon, \gamma)>0$ is chosen sufficiently small.

\emph{Robustness of hypotheses}

\emph{Hyp. (i).} It is clear that hypothesis (i) is robust. 

\emph{Hyp. (\ref{hipu}).} We see that $\varphi\mapsto t_\varphi(\nu)$ is continuous, uniformly in $\nu$.
First, it is clear from (\ref{basic}) that 
\[
     | \log A_{-t \tilde{\varphi}}-\log A_{-t\varphi}|\le K || \tilde{\varphi}- \varphi ||,
\]
for some constant $K=K(\gamma)>0$ (and $t$ varying in a fixed bounded interval). Then, by definition of $t_\varphi(\nu)$, by (\ref{RR1}) and the above, we get
\begin{align*}
      &0=\int \log A_{-t_{\tilde{\varphi}}(\nu)\tilde{\varphi}}\,d\nu-\int \log A_{-t_\varphi(\nu)\varphi}\,d\nu\ge \\
       &\left| \int \log A_{-t_{\tilde{\varphi}}(\nu)\varphi} - \log A_{-t_\varphi(\nu)\varphi}\,d\nu \right|-
        \left| \int \log A_{-t_{\tilde{\varphi}}(\nu)\tilde{\varphi}} - \log A_{-t_{\tilde{\varphi}}(\nu)\varphi}\,d\nu \right| \\
        &\ge \gamma |t_{\tilde{\varphi}}(\nu)-t_{\varphi}(\nu) | - K(\gamma) || \tilde{\varphi}- \varphi ||,
\end{align*}
which proves the claimed.
Since the functions $\varphi\mapsto t_\varphi(\nu)$ and $\psi\mapsto \int \psi \,d\nu$ appearing in (\ref{pvorig2}) are continuous, uniformly in $\nu$, it follows that hypothesis (\ref{hipu}) is robust. 

\emph{Hyp. (ii).} The proof that $(\varphi, \psi)\mapsto \beta_{\varphi, \psi}(t)$ is continuous, uniformly for $t$ in a compact interval, is essentially contained in \cite{8}. In fact, let $t_0\in (\underline{t}_{\varphi}+\varepsilon,\overline{t}_{\varphi}-\varepsilon)$ and $\beta_0=\beta_{\varphi, \psi}(t_0)$. Then $F_{\varphi, \psi}(t_0,\beta_0)=0$ and, given $\eta>0$ sufficiently small, we have by (\ref{imp1}) and continuity (see Proposition 1 of \cite{8}) that there exists $\delta>0$ such that
\[
     F_{\tilde{\varphi}, \tilde{\psi}}(t,\beta_0-\eta)<0 \quad\text{and}\quad F_{\tilde{\varphi}, \tilde{\psi}}(t,\beta_0+\eta)>0
\]
for every $t\in (t_0-\delta, t_0+\delta)$, $||\tilde{\varphi}-\varphi||<\delta$ and $||\tilde{\psi}-\psi||<\delta$. So, by the intermediate value theorem, there is a unique $\tilde{\beta}_{\tilde{\varphi}, \tilde{\psi}}(t)\in (\beta_0-\eta,\beta_0+\eta)$
such that $F_{\tilde{\varphi}, \tilde{\psi}}(t,\tilde{\beta}_{\tilde{\varphi}, \tilde{\psi}}(t))=0$. By uniqueness, we have $\tilde{\beta}_{\tilde{\varphi}, \tilde{\psi}}(t)=\beta_{\tilde{\varphi}, \tilde{\psi}}(t)$ which implies the continuity of $(\varphi, \psi)\mapsto \beta_{\varphi, \psi}(t)$, uniformly for $t$ in a compact interval. Then it follows that hypothesis (ii) is robust.

\emph{Hyp. (iii).} Now we see that $(\varphi, \psi)\mapsto \beta'_{\varphi, \psi}(t)$ is continuous, uniformly for $t$ in a compact interval. This will follow by (\ref{imp3}) if we prove that the functions $(\varphi, \psi,t,\beta)\mapsto \frac{\partial F_{\varphi, \psi}}{\partial t}(t,\beta)$ and $\frac{\partial F_{\varphi, \psi}}{\partial \beta}(t,\beta)$ are continuous. From what has been said until now, it is clear that
\[
      \phi_{(\varphi, \psi,t,\beta)}=(t-D_{\varphi, \psi})\psi+\beta \log A_{-t\varphi}
\] 
is continuous. So the conclusion follows by (\ref{imp1}), (\ref{imp2}) and Proposition \ref{prop1}. Consequently, hypothesis (iii) is robust.

Therefore, if $\tilde\psi, \tilde\varphi$ are as described in statement of Theorem \ref{t5}, there is a unique measure of full dimension $\mu_{\tilde\psi,\tilde\varphi}$, and we can infer about its continuity.

\emph{Continuity of $\mu_{\psi,\varphi}$}.

Since 
\[
   (t,\varphi, \psi)\mapsto \Phi_{t,\varphi, \psi}=(t-D_{\varphi, \psi})\psi+\beta_{\varphi, \psi}(t) \log A_{-t\varphi}
\]
is continuous, we get that $(t,\varphi, \psi)\mapsto \nu_{\Phi_{t,\varphi, \psi}}$ is also continuous (see Proposition 1 of \cite{8}). By Proposition \ref{prop2} we have that
\[
         \mu_{\Phi_{t,\varphi, \psi}}=\mu_{t,\varphi, y}\times  \nu_{\Phi_{t,\varphi, \psi}},
\]
where $\{\mu_{t,\varphi, y}\}$ is the Gibbs family for $-t\varphi$. 
By Theorem 3.1 of \cite{4}, the Gibbs family  $\{\mu_{t,\varphi, y}\}$ is equal to the family of conditional measures $\{\mu_y\}$, on the fibers $\pi^{-1}(y)$, for the measure $\mu$ which is the classical Gibbs sate with respect to the H\"older-continuous potential $-t\varphi - P(\log A_{-t\varphi})$. Then it follows that $(t,\varphi)\mapsto \mu_{t,\varphi, y}$  
is continuous, uniformly in $y$, which implies the continuity of $(t,\varphi, \psi)\mapsto \mu_{\Phi_{t,\varphi, \psi}}$.

Finally, $\mu_{\varphi,\psi}$ is the measure $\mu_{\Phi_{t,\varphi, \psi}}$ where $t=t(\varphi,\psi)$ is the unique solution of 
$\frac{d}{dt} P(\Phi_{t,\varphi, \psi})=0$ (see Theorem \ref{tl2}). Since $\frac{d^2}{dt^2} P(\Phi_{t,\varphi, \psi})<0$ and
$(\varphi, \psi)\mapsto \frac{d}{dt} P(\Phi_{t,\varphi, \psi})$ is continuous, uniformly for $t$ in a compact interval, we get that $t(\varphi,\psi)$ is continuous, and so is $\mu_{\varphi,\psi}$.

\end{proof}

\begin{rem}\label{rem2}
It follows from the proof of Theorem \ref{t5} that the unicity of measure of maximal dimension and the robustness of hypotheses (i.e., everything except, possibly, the continuity of the measure) would also hold \emph{without the hypothesis of $-t\varphi$ being a basic potential}, if we could prove that, for some constant $C>0$,
\begin{align*}
  \int -\frac{d}{dt} \log A_{-t\varphi}\, d \nu_{\Phi_t} &\ge C^{-1} \min \varphi,\\
  \left| \int \frac{d^2}{dt^2} \log A_{-t\varphi}\, d \nu_{\Phi_t} \right| &\le C ||\varphi||_\theta.
\end{align*}
In this case, Theorem \ref{t2} (except, possibly, the continuity of the measure) would hold in the class of Non-linear Lalley-Gatzouras carpets.
\end{rem}

\section{Unique ergodic measure of full dimension}\label{s4}
In this section we prove Theorem \ref{t2}. 

Consider a non-trivial Sierpinski carpet. 
More precisely, consider the alphabet $\mathcal{I}=\{(i,j)\colon i\in\{1,...,m\}\text{ and } j\in\{1,...,m_i\}\}$ where $m$ and $m_i\ge2$ are natural numbers such that $m_i$ are not all equal to each other. For $(i,j)\in\mathcal{I}$, let
\[
         f^\circ_{ij}(x,y)=(a x + u_{ij}, b y + v_i),
\]
where $0<a<b<1$ and the positive numbers $v_i$ and $u_{ij}$ satisfy $b+v_i<v_{i+1}$, $a+u_{ij}<u_{ij+1}$ for all $(i,j)\in\mathcal{I}$, where $v_{m+1}=u_{im_i+1}=1$. Let $\Lambda^\circ$ be the corresponding attractor, i.e.
\[
    \Lambda^\circ=\bigcup_{(i,j)\in\mathcal{I}} f^\circ_{ij}(\Lambda^\circ).
\]

We will consider Non-linear Lalley-Gatzouras carpets $(f_{ij}, \Lambda)$, 
\[
      f_{ij}(x,y)=(a_{ij}(x,y), b_i(y)),\quad{(i,j)\in\mathcal{I}}
\]
which are close to $(f^\circ_{ij}, \Lambda^\circ)$.
Note that, since the alphabet $\mathcal{I}$ is fixed, all of these carpets are topologically modeled by the same Bernoulli shift $T\colon X\to X$, where $X=\mathcal{I}^\mathbb{N}$ and $T((i_1,j_1)(i_2,j_2)...)=((i_2,j_2)...)$, via the conjugacy
$h\colon X \to \Lambda$ given by
\[
       h((i_1,j_1)(i_2,j_2)...)=\bigcap_{n=1}^\infty f_{i_1j_1}\circ f_{i_2j_2}\circ\cdots \circ f_{i_nj_n} ([0,1]^2).
\]
Let $\pi\colon X\to Y$, where $Y=\{1,...,m\}^\mathbb{N}$ and $\pi((i_1,j_1)(i_2,j_2)...)=(i_1i_2...)$. Then $\pi\circ T=S\circ\pi$, where 
$S\colon \{1,...,m\}^\mathbb{N}\to\{1,...,m\}^\mathbb{N}$ is the Bernoulli shift given by  $S(i_1i_2...)=(i_2...)$.

By \cite{7} and \cite{8}, we have that
\[
   \hd\Lambda=\sup_{\mu\in\mathcal{M}(T)}\left\{\frac{h_{\mu\circ\pi^{-1}}(S)}{\int \psi\circ\pi\,d\mu}+\frac{h_{\mu}(T)-h_{\mu\circ\pi^{-1}}(S)}{\int \varphi\,d\mu}\right\},
\]
and that the measures of maximal dimension, as defined in previous section, being ergodic coincide with the \emph{ergodic measures of full dimension} (since the dimension of an ergodic measure is the expression between brackets in equation above). Here $\varphi\colon X\to \mathbb{R}$ and $\psi\colon Y\to\mathbb{R}$ are the positive and H\"older-continuous functions given by
\[
     \varphi((i_1,j_1)(i_2,j_2)...)=-\log \partial_x a_{i_1j_1} (x,y),\quad \psi(i_1i_2...)=-\log b'_i (y),
\]
where $(x,y)=h((i_1,j_1)(i_2,j_2)...)$. So we must see that we satisfy Theorem \ref{t5}'s hypotheses. 

We note that $-t\varphi$ is a basic potential (remember the definition from (\ref{basic})) if we restrict to the subclass of carpets $\mathcal{L}$, for then $\partial_x a_{i_1j_1} (x,y)$ does not depend on $x$. 

For the general Sierpinski carpet, we have that
\[
     t(\nu)=\frac{\sum_{i=1}^m p_i \log m_i}{\log a^{-1}},
\]
where $p_i$ is the $\nu$-measure of the cylinder $\{(i_1i_2...)\in Y \colon i_1=i\}$. Then, (\ref{pvorig2})
reads
\[
       D=\frac{1}{\log b^{-1}} \sup_{\nu\in\mathcal{M}(S)} \left\{h_{\nu}(S)+ \sum_{i=1}^m p_i \log m_i^\rho \right\},
\]
where $\rho=\frac{\log b}{\log a}$. It is well known that this supremum is attained at a Bernoulli measure $\nu$ with all $p_i>0$. Since the numbers $m_i$ are not all equal to each other, it follows that $\underline{t} <t(\nu)<\overline{t}$.
Hence we satisfy hypothesis (\ref{hipu}), for some $\varepsilon>0$.
Also note that $||\varphi^{\circ}||_\alpha=||\psi^{\circ}||_\alpha=0$.

Then, Theorem \ref{t2} follows from applying Theorem \ref{t5} to carpets in $\mathcal{L}$ which are $C^{1+\alpha}$ close to a non-trivial general Sierpinski carpet.

\end{document}